\documentclass[12pt,a4paper]{article}

\usepackage{amsmath,amssymb}
\usepackage{amsthm}

\newtheorem{definition}{Definition}
\newtheorem{example}{Example}

\newtheorem{proposition}{Proposition}
\newtheorem{remark}{Remark}
\newtheorem{theorem}{Theorem}

\title{Linear discrete multitime multiple recurrence}

\author{Cristian Ghiu$^1$, Raluca Tulig\u{a}$^2$, Constantin Udri\c ste$^2$}
\date{}

\begin{document}

\maketitle

\begin{center}
{\footnotesize
$^1$University Politehnica of Bucharest,
Faculty of Applied Sciences,

Department of Mathematical Methods and Models,
Splaiul Independentei 313,

Bucharest 060042, Romania;
e-mail: crisghiu@yahoo.com

$^2$University Politehnica of Bucharest,
Faculty of Applied Sciences, Department of Mathematics-Informatics,
Splaiul Independentei 313,
Bucharest 060042, Romania;
e-mails: ralucacoada@yahoo.com; udriste@mathem.pub.ro

}
\end{center}

\begin{abstract}
The multitime multiple recurrences are common in analysis of algorithms,
computational biology, information theory, queueing theory, filters theory,
statistical physics etc.
The theoretical part about them is little or not known. That is why, the aim of our paper
is to formulate and solve problems concerning nonautonomous multitime multiple recurrence equations.
Among other things, we discuss in detail the cases of linear recurrences with constant coefficients,
highlighting in particular the theorems of existence and uniqueness of solutions.
\end{abstract}

{\bf AMS Subject Classification (2010)}: 65Q99.

{\bf Keywords}: multitime multiple recurrence, 
multiple linear recurrence, fundamental matrix, recurrences on a monoid.

\section{Basic statements}

 A multivariate recurrence relation is an equation that recursively defines a multivariate sequence,
 once one or more initial terms are given: each further term of the sequence is defined as a function
 of the preceding terms. Some simply defined recurrence relations can have very complex (chaotic)
 behaviors, and they are a part of the field of mathematics known as nonlinear analysis.
 We can use such recurrences including the Differential Transform Method to
solve completely integrable first order PDEs system with initial conditions.

In this paper we shall refer to
linear discrete multitime multiple recurrence,
giving original results regarding generic properties and existence and uniqueness of solutions.
Also, we seek to provide a fairly thorough and unified exposition of efficient
recurrence relations in both univariate and multivariate settings.
The scientific sources used in this paper are:
filters theory \cite{DM}, \cite{FoM}-\cite{FMa},
\cite{MGS}, \cite{Pr}, \cite{Ro},
general recurrence theory \cite{HK}, \cite{BP}, 
\cite{E}, \cite{W}, \cite{WCM},
our results regarding the diagonal multitime recurrence \cite{GTU}, \cite{GTUT},
and multitime dynamical systems \cite{U1}-\cite{U7}.

Let $m\geq 1$ be an integer number.
We denote ${\bf 1}=(1,1,\ldots, 1) \in \mathbb{Z}^m$. Also, for each
$\alpha \in \{ 1,2, \ldots, m \}$, we denote
$1_{\alpha}=(0,\ldots,0,1,0,\ldots, 0)\in \mathbb{Z}^m$, i.e.,
$1_{\alpha}$ has $1$ on the position $\alpha$ and $0$ otherwise.

On $\mathbb{Z}^m$, we define the relation $``\leq"$:
for $t=(t^1,\ldots, t^m)$,
$s=(s^1,\ldots, s^m)$,
\begin{equation*}
s\leq t\,\,\,\, \mbox{if}\,\,\,
s^{\alpha} \leq t^{\alpha},\,\,
\forall
\alpha \in \{ 1,2, \ldots, m \}.
\end{equation*}
One observes that $``\leq"$ is a partial order relation on $\mathbb{Z}^m$.

Let $M$ be an arbitrary nonvoid set and $t_1\in \mathbb{Z}^m$ be a fixed element.
We consider the functions $F_\alpha \colon \big\{ t \in \mathbb{Z}^m \, \big|\,
t \geq t_1 \big\} \times M \to M$, $\alpha \in \{ 1,2, \ldots, m \}$.

We fix $t_0 \in \mathbb{Z}^m$, $t_0 \geq t_1$.
A first order multitime recurrence of the type
\begin{equation}
\label{rec.alfa}
x(t+1_\alpha) = F_\alpha (t,x(t)),
\quad
\forall t \in \mathbb{Z}^m,\,\,
t\geq t_0,\,\,
\forall \alpha \in \{1,2, \ldots, m\},
\end{equation}
is called a {\it discrete multitime multiple recurrence}.

This model of multiple recurrence can be justified by the fact that a completely integrable first order PDE system
$$\frac{\partial x^i}{\partial t^\alpha}(t)= X^i_\alpha(t,x(t)), \, t \in \mathbb{R}^m$$
can be discretized as
$$x^i(t+1_\alpha)-x^i(t) = X^i_\alpha(t,x(t)),\, t\in \mathbb{Z}^m.$$
The initial (Cauchy) condition, for the PDE system, is translated
into initial condition for the multiple recurrence.

In the paper \cite{GTUd} has shown the following result
\begin{proposition}
\label{alfa.p1}
If for any $(t_0,x_0) \in
\big\{ t \in \mathbb{Z}^m \, \big|\, t \geq t_1 \big\}
\times M$,
there exists at least one solution
$x \colon
\big\{ t \in \mathbb{Z}^m \, \big|\,
t \geq t_0 \big\} \to M$
which verifies the recurrence {\rm (\ref{rec.alfa})} and the initial condition
$x(t_0)=x_0$, then
\begin{equation}
\label{alfap1.1}
F_\alpha(t+1_\beta,F_\beta (t,x))
=
F_\beta(t+1_\alpha,F_\alpha (t,x)),
\quad
\quad
\forall t \geq t_1,\,\,
\forall x \in M,
\end{equation}
$$
\forall \alpha, \beta \in \{1,2, \ldots, m\}.
$$
\end{proposition}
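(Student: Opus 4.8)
The plan is to exploit the hypothesis by choosing, for each fixed pair $(t,x)$ with $t \geq t_1$ and $x \in M$, the initial data $t_0 := t$ and $x_0 := x$. Since $t \geq t_1$, this is an admissible choice, so by assumption there is a solution of {\rm (\ref{rec.alfa})} on $\big\{ s \in \mathbb{Z}^m \,\big|\, s \geq t \big\}$, which I will denote $y(\cdot)$ to avoid clashing with the fixed $x$, satisfying $y(t) = x$. I would then evaluate $y(t + 1_\alpha + 1_\beta)$ in two different ways and invoke the commutativity $1_\alpha + 1_\beta = 1_\beta + 1_\alpha$ in $\mathbb{Z}^m$.

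First, the case $\alpha = \beta$ makes {\rm (\ref{alfap1.1})} trivially true, so one may restrict to $\alpha \neq \beta$ (though the argument below does not actually require this). Applying the recurrence at the point $t$, which satisfies $t \geq t_0$, gives $y(t + 1_\alpha) = F_\alpha(t, y(t)) = F_\alpha(t, x)$ and, likewise, $y(t + 1_\beta) = F_\beta(t, x)$. Applying it once more, at the point $t + 1_\alpha \geq t_0$ in the direction $\beta$, yields
\begin{equation*}
y(t + 1_\alpha + 1_\beta) = F_\beta\big( t + 1_\alpha,\, y(t + 1_\alpha) \big) = F_\beta\big( t + 1_\alpha,\, F_\alpha(t, x) \big),
\end{equation*}
and symmetrically, at the point $t + 1_\beta \geq t_0$ in the direction $\alpha$,
\begin{equation*}
y(t + 1_\beta + 1_\alpha) = F_\alpha\big( t + 1_\beta,\, y(t + 1_\beta) \big) = F_\alpha\big( t + 1_\beta,\, F_\beta(t, x) \big).
\end{equation*}

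Since $t + 1_\alpha + 1_\beta = t + 1_\beta + 1_\alpha$, the two left-hand sides coincide, and comparing the right-hand sides gives exactly {\rm (\ref{alfap1.1})} for this particular $t$ and $x$. As $t \geq t_1$, $x \in M$ and $\alpha, \beta$ were arbitrary, the identity holds in full generality. I do not anticipate a serious obstacle: the only points needing care are checking that every evaluation point — namely $t$, $t + 1_\alpha$, $t + 1_\beta$ and $t + 1_\alpha + 1_\beta$ — lies in the domain $\big\{ s \,\big|\, s \geq t_0 \big\}$ of the chosen solution, which holds because $t_0 = t$, and that the recurrence {\rm (\ref{rec.alfa})} is invoked only at times $\geq t_0$, which is likewise satisfied.
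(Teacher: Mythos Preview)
Your argument is correct and is the natural one: fix $(t,x)$, invoke the hypothesis with $t_0=t$, $x_0=x$, and compute $y(t+1_\alpha+1_\beta)$ two ways using the recurrence. All domain checks are handled. Note, however, that the present paper does not actually give a proof of this proposition; it merely quotes the result from the companion paper \cite{GTUd}. So there is no in-paper proof to compare against, but your proof is exactly the standard compatibility argument one would expect in that reference.
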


In the same paper \cite{GTUd}, we have proved the following two theorems.
\begin{theorem}
\label{alfa.t3}
Let $M$ be an arbitrary nonvoid set and $t_0 \in \mathbb{Z}^m$.
We consider the functions
$F_\alpha \colon \big\{ t \in \mathbb{Z}^m \, \big|\,
t \geq t_0 \big\} \times M \to M$,
$\alpha \in \{ 1,2, \ldots, m \}$
such that
\begin{equation}
\label{alfat3.1}
F_\alpha(t+1_\beta,F_\beta (t,x))
=
F_\beta(t+1_\alpha,F_\alpha (t,x)),
\end{equation}
$$
\forall t \geq t_0,\,\,
\forall x \in M,
\quad
\forall \alpha, \beta \in \{1,2, \ldots, m\}.
$$
Then, for any $x_0 \in  M$, there exists a unique function
$x \colon \big\{ t \in \mathbb{Z}^m \, \big|\,
t \geq t_0 \big\} \to M$ which verifies
\begin{equation}
\label{alfat3.2}
x(t+1_\alpha) = F_\alpha (t,x(t)),
\quad
\forall t\geq t_0,
\quad
\forall \alpha \in \{1,2, \ldots, m\},
\end{equation}
and the condition $x(t_0)=x_0$.
\end{theorem}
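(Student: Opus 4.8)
The plan is to construct the solution $x$ on the whole cone $\{t\in\mathbb Z^m\mid t\ge t_0\}$ by induction on the "total level" $n=(t^1-t_0^1)+\cdots+(t^m-t_0^m)\ge 0$, and then verify separately that the function so defined satisfies all $m$ recurrences. First I would set $x(t_0)=x_0$ and, for a point $t$ at level $n+1$, pick any index $\alpha$ with $t^\alpha>t_0^\alpha$ and define $x(t):=F_\alpha\big(t-1_\alpha,\,x(t-1_\alpha)\big)$; the point $t-1_\alpha$ sits at level $n$, so this is a legitimate recursive definition provided it does not depend on the choice of $\alpha$. Well-definedness is exactly where the compatibility hypothesis (\ref{alfat3.1}) enters, and I expect this to be the main obstacle.

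To prove that $x(t)$ does not depend on the admissible index chosen, suppose $\alpha\ne\beta$ are both admissible at $t$, i.e. $t^\alpha>t_0^\alpha$ and $t^\beta>t_0^\beta$. Then $s:=t-1_\alpha-1_\beta$ satisfies $s\ge t_0$ and $s$ lies at level $n-1$. Writing $t-1_\alpha=s+1_\beta$ and $t-1_\beta=s+1_\alpha$, I would compute
\begin{equation*}
F_\alpha\big(t-1_\alpha,x(t-1_\alpha)\big)=F_\alpha\big(s+1_\beta,\,x(s+1_\beta)\big)
=F_\alpha\big(s+1_\beta,\,F_\beta(s,x(s))\big),
\end{equation*}
where the last equality uses the inductive fact that $x(s+1_\beta)=F_\beta(s,x(s))$ — which itself needs to be part of a strengthened induction hypothesis (see below). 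Symmetrically,
\begin{equation*}
F_\beta\big(t-1_\beta,x(t-1_\beta)\big)=F_\beta\big(s+1_\alpha,\,F_\alpha(s,x(s))\big),
\end{equation*}
and these two expressions are equal by (\ref{alfat3.1}) applied at the point $s\ge t_0$. The subtlety is that at level $n-1$ one needs to know not merely the value $x(s)$ but also that the values at the neighbours $s+1_\gamma$ are produced by $F_\gamma$ from $x(s)$; hence the induction should carry the statement: "$x$ is well defined up to level $n$, and for every $s$ of level $\le n-1$ and every $\gamma$ with $s+1_\gamma$ of level $\le n$ one has $x(s+1_\gamma)=F_\gamma(s,x(s))$." With that formulation the step above goes through, and the case where only one index is admissible (all mass concentrated on one coordinate direction) is trivial.

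It remains to check that the constructed $x$ satisfies (\ref{alfat3.2}) everywhere and that it is the unique such function. For (\ref{alfat3.2}): given arbitrary $t\ge t_0$ and arbitrary $\alpha$, the point $t+1_\alpha$ has $\alpha$ admissible, so by the (now established) definition $x(t+1_\alpha)=F_\alpha(t,x(t))$ — note this holds for \emph{every} $\alpha$, not just one, precisely because of the independence-of-choice property. Uniqueness is the easy direction: if $y$ is any solution with $y(t_0)=x_0$, a straightforward induction on the level shows $y(t)=x(t)$, since at each level every point is reachable from a lower-level point via some $F_\alpha$ and the recurrence forces the value. Thus the main work is concentrated in the well-definedness argument, which is the combinatorial heart of the multitime setting and the place where Proposition~\ref{alfa.p1}'s necessary condition (\ref{alfap1.1}) reappears here as a sufficient condition.
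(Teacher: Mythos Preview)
The paper does not contain its own proof of this theorem: it is quoted verbatim from the companion paper \cite{GTUd}, so there is no in-text argument here to compare against. That said, your proposal is correct and is precisely the standard argument one expects for such a result: induct on the total height $|t-t_0|=\sum_\alpha(t^\alpha-t_0^\alpha)$, define $x(t)$ via any admissible backward step $F_\alpha(t-1_\alpha,x(t-1_\alpha))$, and use the compatibility hypothesis (\ref{alfat3.1}) at the point $s=t-1_\alpha-1_\beta$ (height $|t-t_0|-2$) to show the result is independent of the chosen $\alpha$. Your care in strengthening the inductive statement to include $x(s+1_\gamma)=F_\gamma(s,x(s))$ for already-constructed neighbours is appropriate; note that once independence of choice is established at a given level this relation is automatic, since $\gamma$ is always admissible at $s+1_\gamma$ whenever $s\ge t_0$. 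The verification of (\ref{alfat3.2}) and the uniqueness argument are both straightforward, as you indicate.
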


\begin{theorem}
\label{alfa.t4}
Let $M$ be a nonvoid set. For each
$\alpha \in \{ 1,2, \ldots, m \}$, we consider the function
$F_{\alpha} \colon \mathbb{Z}^m \times M \to M$,
to which we associate the recurrence equation
\begin{equation}
\label{ecalfat4.1}
x(t+1_\alpha) = F_\alpha (t,x(t)),
\quad
\forall \alpha \in \{1,2, \ldots, m\}.
\end{equation}

The following statements are equivalent:

\noindent
$i)$ For any $\alpha\in \{1,2, \ldots, m\}$
and any $t\in \mathbb{Z}^m$, the functions $F_{\alpha}(t, \cdot)$ are bijective
and
\begin{equation}
\label{ecalfat4.2}
F_\alpha(t+1_\beta,F_\beta (t,x))
=
F_\beta(t+1_\alpha,F_\alpha (t,x)),
\end{equation}
$$
\forall (t,x) \in \mathbb{Z}^m \times M,
\,\,\,
\forall \alpha, \beta \in \{1,2, \ldots, m\}.
$$

\noindent
$ii)$ For any pair $(t_0,x_0) \in \mathbb{Z}^m \times M$,
and any index $\alpha_0 \in \{ 1,2, \ldots, m \}$,
there exists a unique function
$x \colon \big\{ t \in \mathbb{Z}^m \, \big|\,
t \geq t_0-1_{\alpha_0} \big\} \to M$,
which, for each $t \geq t_0-1_{\alpha_0}$ verifies the relations
{\rm (\ref{ecalfat4.1})},
and also the condition $x(t_0)=x_0$.

\vspace{0.1 cm}
\noindent
$iii)$ For any $t_0, t_1 \in \mathbb{Z}^m$, with $t_1 \leq t_0$,
and any $x_0 \in  M$, there exists a function
$x \colon \big\{ t \in \mathbb{Z}^m \, \big|\,
t \geq t_1 \big\} \to M$,
which, for any $t \geq t_1$, verifies the relations
{\rm (\ref{ecalfat4.1})}, and also the condition $x(t_0)=x_0$.

\vspace{0.1 cm}
\noindent
$iv)$ For any
$(t_0,x_0) \in \mathbb{Z}^m \times M$,
there exists a unique function
$x \colon \mathbb{Z}^m \to M$,
which, for any $t \in \mathbb{Z}^m $, verifies the relation
{\rm (\ref{ecalfat4.1})},
and also the condition $x(t_0)=x_0$.
\end{theorem}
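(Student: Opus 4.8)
The plan is to establish the cycle of implications $i) \Rightarrow iv) \Rightarrow iii) \Rightarrow ii) \Rightarrow i)$, exploiting Theorem~\ref{alfa.t3} as the workhorse for forward construction and exploiting the bijectivity in $i)$ to run the recurrence backwards. First I would prove $i) \Rightarrow iv)$. Given $(t_0,x_0)$, the idea is to translate $t_0$ to an arbitrary far-down corner: for a point $t$ that is \emph{not} $\geq t_0$, I want to reach it by going forward from some common lower bound. Concretely, fix $t \in \mathbb{Z}^m$ arbitrary and pick $t_1 \leq t_0$ and $t_1 \leq t$ (e.g. $t_1^\alpha = \min\{t_0^\alpha, t^\alpha\}$). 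Using the bijectivity of each $F_\alpha(s,\cdot)$, I can solve for $x(t_1)$: starting from $x(t_0)=x_0$ and inverting the recurrences $x(s+1_\alpha)=F_\alpha(s,x(s))$ step by step along any monotone lattice path from $t_1$ to $t_0$, one obtains a candidate value $x(t_1)$; the commutation relation (\ref{ecalfat4.2}) guarantees (via the same argument as in Theorem~\ref{alfa.t3}, applied to the ``inverted'' recurrences $G_\alpha(s,y) := F_\alpha(s,\cdot)^{-1}(y)$, which also satisfy a compatibility identity obtained by inverting (\ref{ecalfat4.2})) that this value is path-independent. Then Theorem~\ref{alfa.t3}, applied with initial point $t_1$ and initial value $x(t_1)$, produces a unique $x\colon\{s \geq t_1\}\to M$ solving (\ref{ecalfat4.1}); since $t$ was arbitrary and all these partial solutions agree on overlaps (again by uniqueness in Theorem~\ref{alfa.t3}), they glue to a unique $x\colon \mathbb{Z}^m \to M$ with $x(t_0)=x_0$.

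Next, $iv) \Rightarrow iii)$ is immediate: the function $x$ from $iv)$ restricted to $\{t \geq t_1\}$ does the job, for any $t_1 \leq t_0$. Then $iii) \Rightarrow ii)$: given $(t_0,x_0)$ and $\alpha_0$, apply $iii)$ with $t_1 := t_0 - 1_{\alpha_0} \leq t_0$ to get existence of a solution on $\{t \geq t_0 - 1_{\alpha_0}\}$ with $x(t_0)=x_0$; uniqueness on that set follows because the value at any $t \geq t_0 - 1_{\alpha_0}$ is forced — every such $t$ is reachable from $t_0 - 1_{\alpha_0}$, hence from a point where the value is determined, by forward steps (for points $\geq t_0 - 1_{\alpha_0}$ one first reaches $t_0$-comparable points; more carefully, one shows by induction on $\sum (t^\alpha - (t_0^\alpha - \delta_{\alpha\alpha_0}))$ that $x(t)$ is determined, the one subtle case being how $x(t_0 - 1_{\alpha_0})$ itself is pinned down, which uses that $x(t_0 - 1_{\alpha_0} + 1_{\alpha_0}) = x(t_0) = x_0$ together with a downward argument — this is exactly where the structure of $ii)$ with its shifted domain is tailored to what is actually determined).

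Finally, $ii) \Rightarrow i)$. The commutation relation (\ref{ecalfat4.2}) follows from Proposition~\ref{alfa.p1} (whose hypothesis — existence of a solution through every initial point — is granted by $ii)$, restricting to domains of the form $\{t \geq t_0\}$). For the bijectivity of $F_\alpha(t,\cdot)$: surjectivity and injectivity both come from the \emph{uniqueness} and the shifted domain $\{t \geq t_0 - 1_{\alpha_0}\}$ in $ii)$. Given $t$ and a target value $y \in M$, choose $t_0 := t + 1_\alpha$ and $\alpha_0 := \alpha$; the unique solution on $\{s \geq t_0 - 1_\alpha\} = \{s \geq t\}$ with $x(t_0)=y$ has $x(t) =: x'$ satisfying $F_\alpha(t,x') = x(t+1_\alpha) = y$, giving surjectivity. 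For injectivity, if $F_\alpha(t,x_1)=F_\alpha(t,x_2)=:y$, then both the solution through $(t,x_1)$ and that through $(t,x_2)$ (exist on $\{s \geq t\}$ by $ii)$ with $t_0 = t$, which is allowed taking $\alpha_0$ arbitrary) agree from $t+1_\alpha$ onward, so by the uniqueness clause in $ii)$ applied at $t_0 = t+1_\alpha$, $\alpha_0 = \alpha$ — whose domain $\{s \geq t\}$ contains $t$ — we get $x_1 = x(t) = x_2$. The main obstacle I anticipate is the path-independence bookkeeping in $i)\Rightarrow iv)$: one must check that inverting the compatibility relation (\ref{ecalfat4.2}) yields precisely the hypothesis of Theorem~\ref{alfa.t3} for the backward recurrences $G_\alpha$, so that the existing theorem can be invoked rather than reproving a two-dimensional ``commuting squares'' lemma from scratch.
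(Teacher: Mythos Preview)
This theorem is not proved in the present paper; it is quoted from the companion article \cite{GTUd}, so there is no in-text argument to compare your proposal against directly.

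On its own merits, your cycle has a genuine gap at $iii)\Rightarrow ii)$. Statement $iii)$ asserts only \emph{existence}, while $ii)$ demands \emph{uniqueness}; you try to recover the uniqueness by claiming that ``the value at any $t\geq t_0-1_{\alpha_0}$ is forced'', but this is exactly what fails. Knowing $x(t_0)=x_0$ determines $x(t)$ for $t\geq t_0$ by forward iteration, yet $x(t_0-1_{\alpha_0})$ is constrained only by the requirement that $F_{\alpha_0}(t_0-1_{\alpha_0},\cdot)$ send it to $x_0$, and nothing in $iii)$ forces that map to be injective. In fact $iii)$ as printed does not imply $ii)$ at all: with $m=1$, $M=\mathbb{Z}$, $F_1(t,x)=\lfloor x/2\rfloor$, statement $iii)$ holds (take $x(t_1)=2^{\,t_0-t_1}x_0$ and run forward), while $ii)$ fails because both $x(t_0-1)=2x_0$ and $x(t_0-1)=2x_0+1$ yield valid solutions through $(t_0,x_0)$. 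Your hedged ``downward argument'' therefore cannot be completed.

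Comparing with the linear analogue, Theorem~\ref{alfa.t6}, whose item $iv)$ (the counterpart of the present $iii)$) \emph{does} carry the word ``unique'' and is explicitly said to follow from Theorem~\ref{alfa.t4}, the most plausible reading is that the word ``unique'' is missing from $iii)$ here. With that amendment your cycle works cleanly: $iii)\Rightarrow ii)$ becomes a restriction of domain, and your remaining steps --- $i)\Rightarrow iv)$ by inverting the maps, checking that the inverted compatibility identity again has the form required by Theorem~\ref{alfa.t3}, and gluing; $iv)\Rightarrow iii)$ by restriction; $ii)\Rightarrow i)$ via Proposition~\ref{alfa.p1} together with the surjectivity/injectivity argument you gave (which is correct, the key being that the domain $\{s\geq (t+1_\alpha)-1_\alpha\}=\{s\geq t\}$ contains $t$) --- are all sound.
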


\section{Linear discrete multitime \\multiple recurrence}

Let $K$ be a field.
We denote by ${\cal Z}$ one of the sets
$\mathbb{Z}^m$ or
$ \big\{ t \in \mathbb{Z}^m \, \big|\,
t \geq t_1 \big\}$  (with $t_1 \in \mathbb{Z}^m$).

For each $\alpha \in \{1,2, \ldots, m\}$,
we consider the functions
$$
A_\alpha \colon \mathcal{Z} \to \mathcal{M}_{n}(K),
\quad
b_\alpha \colon \mathcal{Z} \to K^n=\mathcal{M}_{n,1}(K),
$$
which define the recurrence
\begin{equation}
\label{rec.alfa.lin}
x(t+1_\alpha) = A_\alpha(t)x(t)+b_\alpha(t),
\quad
\forall \alpha \in \{1,2, \ldots, m\},
\end{equation}
with the unknown function
$x \colon \big\{ t \in \mathcal{Z} \, \big|\,
t \geq t_0 \big\} \to K^n=\mathcal{M}_{n,1}(K)$,
$t_0 \in \mathcal{Z}$.
This is a particular case of
discrete multitime multiple recurrence (\ref{rec.alfa}),
with $M=K^n$ and
$F_{\alpha}(t,x)=A_{\alpha}(t)x+b_\alpha(t)$.

\begin{theorem}
\label{alfa.t5}
$a)$ If, for any $(t_0,x_0) \in \mathcal{Z} \times K^n$,
there exists at least one function
$x \colon \big\{ t \in \mathcal{Z} \, \big|\,
t \geq t_0 \big\} \to K^n$, which,
for any $t\geq t_0$, verifies the recurrence
{\rm (\ref{rec.alfa.lin})} and the condition $x(t_0)=x_0$, then
\begin{equation}
\label{ecalfat5.1}
A_\alpha(t+1_\beta)A_\beta(t)
=
A_\beta(t+1_\alpha)A_\alpha(t),
\end{equation}
\begin{equation}
\label{ecalfat5.2}
A_\alpha(t+1_\beta)b_\beta(t)+b_\alpha(t+1_\beta)
=
A_\beta(t+1_\alpha)b_\alpha(t)+b_\beta(t+1_\alpha),
\end{equation}
$$
\forall t \in  \mathcal{Z},
\quad
\forall \alpha, \beta \in \{1,2, \ldots, m\}.
$$

$b)$ If the relations {\rm (\ref{ecalfat5.1})},
{\rm (\ref{ecalfat5.2})} are satisfied,
then, for any $(t_0,x_0) \in \mathcal{Z} \times K^n$,
there exists a unique function $x \colon \big\{ t \in \mathcal{Z} \, \big|\,
t \geq t_0 \big\} \to K^n$, which,
for any $t\geq t_0$ verifies the recurrence
{\rm (\ref{rec.alfa.lin})} and the initial condition $x(t_0)=x_0$.
\end{theorem}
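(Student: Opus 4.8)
The plan is to reduce Theorem~\ref{alfa.t5} to the general existence--uniqueness result of Theorem~\ref{alfa.t3} (and, for part $a)$, to Proposition~\ref{alfa.p1}) by identifying the affine maps $F_\alpha(t,x)=A_\alpha(t)x+b_\alpha(t)$ and translating the abstract compatibility condition~(\ref{alfat3.1}) into the two matrix/vector identities~(\ref{ecalfat5.1}) and~(\ref{ecalfat5.2}). The key computational observation is that composing two affine maps gives
\begin{equation*}
F_\alpha\big(t+1_\beta,F_\beta(t,x)\big)
= A_\alpha(t+1_\beta)\big(A_\beta(t)x+b_\beta(t)\big)+b_\alpha(t+1_\beta)
= A_\alpha(t+1_\beta)A_\beta(t)\,x + \big(A_\alpha(t+1_\beta)b_\beta(t)+b_\alpha(t+1_\beta)\big),
\end{equation*}
which is again affine in $x$, with linear part $A_\alpha(t+1_\beta)A_\beta(t)$ and constant part $A_\alpha(t+1_\beta)b_\beta(t)+b_\alpha(t+1_\beta)$. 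Writing the analogous expression with $\alpha$ and $\beta$ swapped, the compatibility condition $F_\alpha(t+1_\beta,F_\beta(t,x))=F_\beta(t+1_\alpha,F_\alpha(t,x))$ for all $x\in K^n$ becomes the equality of two affine functions of $x$, hence is equivalent to the equality of their linear parts, which is exactly~(\ref{ecalfat5.1}), together with the equality of their constant parts, which is exactly~(\ref{ecalfat5.2}). Here I use that two affine maps $K^n\to K^n$ coincide on all of $K^n$ iff their linear parts and their constant terms coincide (evaluate at $x=0$ to get the constant parts equal, then subtract).

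For part $a)$, I would apply Proposition~\ref{alfa.p1} with $M=K^n$, $F_\alpha(t,x)=A_\alpha(t)x+b_\alpha(t)$: the hypothesis of part $a)$ is precisely the hypothesis of that proposition (existence of at least one solution for every initial datum), so its conclusion~(\ref{alfap1.1}) holds, i.e.\ $F_\alpha(t+1_\beta,F_\beta(t,x))=F_\beta(t+1_\alpha,F_\alpha(t,x))$ for all $t$ and all $x$; by the computation above this is equivalent to~(\ref{ecalfat5.1}) and~(\ref{ecalfat5.2}). One small point to check is that the index set ``$t\geq t_1$'' used in Proposition~\ref{alfa.p1} matches the set $\mathcal{Z}$ used here in the case $\mathcal{Z}=\mathbb{Z}^m$; this is harmless since $\mathbb{Z}^m$ itself can be regarded as $\{t\in\mathbb{Z}^m\mid t\geq t_1\}$ for no constraint, or one simply observes the proof of that proposition is local and goes through verbatim.

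For part $b)$, conversely, assume~(\ref{ecalfat5.1}) and~(\ref{ecalfat5.2}). By the same computation these two identities give back $F_\alpha(t+1_\beta,F_\beta(t,x))=F_\beta(t+1_\alpha,F_\alpha(t,x))$ for all $t\in\mathcal{Z}$ and all $x\in K^n$, which is condition~(\ref{alfat3.1}) of Theorem~\ref{alfa.t3} with $M=K^n$. That theorem then yields, for every $(t_0,x_0)$, a unique $x\colon\{t\in\mathcal{Z}\mid t\geq t_0\}\to K^n$ satisfying the recurrence~(\ref{rec.alfa.lin}) and $x(t_0)=x_0$, which is exactly the assertion of part $b)$. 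I do not expect any serious obstacle: the only thing to be careful about is the elementary but essential lemma that an affine map is determined by its linear and constant parts (so that ``equal for all $x$'' really does split into the two stated identities and not something weaker), and the bookkeeping that the domain restriction $t\geq t_0$ in $\mathcal{Z}$ is compatible with the domains in the quoted theorems. Everything else is the direct substitution $F_\alpha(t,x)=A_\alpha(t)x+b_\alpha(t)$ into the already-established abstract results.
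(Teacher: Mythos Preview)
Your proposal is correct and follows essentially the same route as the paper: reduce to Proposition~\ref{alfa.p1} for part~$a)$ and to Theorem~\ref{alfa.t3} for part~$b)$, using the affine form $F_\alpha(t,x)=A_\alpha(t)x+b_\alpha(t)$ and splitting the compatibility identity into its linear and constant parts. The only cosmetic difference is that the paper extracts the linear part by plugging in the canonical basis vectors $e_1,\dots,e_n$ rather than invoking the abstract ``affine maps are determined by linear and constant parts'' lemma, and it handles the $\mathcal{Z}=\mathbb{Z}^m$ case exactly as you suggest (noting that $t_1$ is then arbitrary).
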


\begin{proof}
$a)$ One applies the Proposition \ref{alfa.p1}.
The relations (\ref{alfap1.1}) become
\begin{equation*}
A_\alpha(t+1_\beta)
\big(A_\beta(t)x+b_\beta(t)\big)
+b_\alpha(t+1_\beta)
=
A_\beta(t+1_\alpha)
\big(A_\alpha(t)x+b_\alpha(t)\big)
+b_\beta(t+1_\alpha),
\end{equation*}
\begin{equation*}
\forall x \in K^n,\,\,
\forall t \geq t_1.
\end{equation*}

In the case ${\cal Z}=\mathbb{Z}^n$, the point $t_1$ is arbitrary.
We deduce that the foregoing relations are true $\forall x \in K^n$, $\forall t \in {\cal Z}$.
Setting $x=0$, we obtain the relations
(\ref{ecalfat5.2}). It follows that
\begin{equation}
\label{ecalfat5.3}
A_\alpha(t+1_\beta)A_\beta(t)x
=
A_\beta(t+1_\alpha)A_\alpha(t)x,
\quad
\forall x \in K^n,\,\,
\forall t \in \mathcal{Z}.
\end{equation}

For $j \in \{1,2, \ldots, n\}$,
let $e_j=(0,\ldots,0,1,0,\ldots, 0)^{\top}$
be the column of $K^n$ which has $1$ on the position $j$
and $0$ in rest. From (\ref{ecalfat5.3}) it follows
\begin{equation*}
A_\alpha(t+1_\beta)A_\beta(t)
\cdot
\Big(
  \begin{array}{cccc}
    e_1 & e_2 & ... & e_n \\
  \end{array}
\Big)
=
A_\beta(t+1_\alpha)A_\alpha(t)
\cdot
\Big(
  \begin{array}{cccc}
    e_1 & e_2 & ... & e_n \\
  \end{array}
\Big),
\end{equation*}
equivalent to
$$A_\alpha(t+1_\beta)A_\beta(t)I_n=
A_\beta(t+1_\alpha)A_\alpha(t)I_n,$$
i.e., the relations (\ref{ecalfat5.1}).
\end{proof}

For $F_{\alpha}(t,x)=A_{\alpha}(t)x+b_\alpha(t)$,
one observes that the function $F_{\alpha}(t,\cdot)$ is injective
if and only if $F_{\alpha}(t,\cdot)$ is surjective, i.e.,
if and only if the matrix $A_{\alpha}(t)$ is invertible.

Hence, in this case, the Theorem \ref{alfa.t4} can be written
\begin{theorem}
\label{alfa.t6}
For each $\alpha \in \{1,2, \ldots, m\}$, we consider the functions
$$
A_\alpha \colon \mathbb{Z}^m \to \mathcal{M}_{n}(K),\
\quad
b_\alpha \colon \mathbb{Z}^m \to K^n=\mathcal{M}_{n,1}(K).
$$
which define the recurrence {\rm (\ref{rec.alfa.lin})}.

The following statements are equivalent:

\vspace{0.1 cm}
\noindent
$i)$ For any
$\alpha\in \{1,2, \ldots, m\}$
and any $t\in \mathbb{Z}^m$,
the matrix $A_{\alpha}(t)$ is invertible
and $\forall t \in \mathbb{Z}^m$,
$\forall \alpha, \beta \in \{1,2, \ldots, m\}$
the relations {\rm (\ref{ecalfat5.1})}, {\rm (\ref{ecalfat5.2})}
hold.

\vspace{0.1 cm}
\noindent
$ii)$ For any pair $(t_0,x_0) \in \mathbb{Z}^m \times K^n$,
and any $\alpha_0 \in \{ 1,2, \ldots, m \}$,
there exists at least one function
$x \colon \big\{ t \in \mathbb{Z}^m \, \big|\,
t \geq t_0-1_{\alpha_0} \big\} \to K^n$,
which, for any $t \geq t_0-1_{\alpha_0}$, verifies the relations
{\rm (\ref{rec.alfa.lin})},
and also the condition $x(t_0)=x_0$.

\vspace{0.1 cm}
\noindent
$iii)$ For any pair $(t_0,x_0) \in \mathbb{Z}^m \times K^n$,
and any $\alpha_0 \in \{ 1,2, \ldots, m \}$,
there exists a unique function
$x \colon \big\{ t \in \mathbb{Z}^m \, \big|\,
t \geq t_0-1_{\alpha_0} \big\} \to K^n$,
which, for any
$t \geq t_0-1_{\alpha_0}$, verifies the relations
{\rm (\ref{rec.alfa.lin})},
and also the condition $x(t_0)=x_0$.

\vspace{0.1 cm}
\noindent
$iv)$ For any $t_0, t_1 \in \mathbb{Z}^m$,
with $t_1 \leq t_0$,
and for any $x_0 \in  K^n$,
there exists a unique function
$x \colon \big\{ t \in \mathbb{Z}^m \, \big|\,
t \geq t_1 \big\} \to K^n$,
which, for any $t \geq t_1$, verifies the relations
{\rm (\ref{rec.alfa.lin})},
and also the condition $x(t_0)=x_0$.

\vspace{0.1 cm}
\noindent
$v)$ For any pair
$(t_0,x_0) \in \mathbb{Z}^m \times K^n$,
there exists a unique function
$x \colon \mathbb{Z}^m \to K^n$,
which, for any $t \in \mathbb{Z}^m $, verifies the relations
{\rm (\ref{rec.alfa.lin})},
and also the condition $x(t_0)=x_0$.
\end{theorem}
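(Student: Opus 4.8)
The plan is to deduce everything from Theorem \ref{alfa.t4} applied to $F_\alpha(t,x)=A_\alpha(t)x+b_\alpha(t)$, using the two elementary observations recorded just before the statement. First, since $K^n$ is finite dimensional, the affine map $F_\alpha(t,\cdot)$ is bijective exactly when $A_\alpha(t)$ is invertible. Second, substituting $F_\alpha(t,x)=A_\alpha(t)x+b_\alpha(t)$ into (\ref{ecalfat4.2}) and, exactly as in the proof of Theorem \ref{alfa.t5}, separating the part linear in $x$ from the constant part (put $x=0$, then subtract), one sees that (\ref{ecalfat4.2}) holds for all $(t,x)$ if and only if both (\ref{ecalfat5.1}) and (\ref{ecalfat5.2}) hold for all $t$. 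Hence statement $i)$ here is precisely statement $i)$ of Theorem \ref{alfa.t4}, statement $iii)$ here is statement $ii)$ of Theorem \ref{alfa.t4}, and statement $v)$ here is statement $iv)$ of Theorem \ref{alfa.t4}; so $i)\Leftrightarrow iii)\Leftrightarrow v)$ is immediate.

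It remains to place $ii)$ and $iv)$ in the equivalence, and I would close the cycle $i)\Rightarrow iv)\Rightarrow iii)\Rightarrow ii)\Rightarrow i)$. The implication $iv)\Rightarrow iii)$ is trivial (take $t_1=t_0-1_{\alpha_0}$, so that $\{t\ge t_1\}=\{t\ge t_0-1_{\alpha_0}\}$), and $iii)\Rightarrow ii)$ is trivial as well (a unique solution is in particular a solution). For $ii)\Rightarrow i)$ I would argue as follows. Restricting a solution furnished by $ii)$ to $\{t\ge t_0\}$, the hypothesis of part $a)$ of Theorem \ref{alfa.t5} is met (with $\mathcal{Z}=\mathbb{Z}^m$), which yields (\ref{ecalfat5.1}) and (\ref{ecalfat5.2}). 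To obtain that an arbitrary $A_{\alpha_0}(s)$ is invertible, apply $ii)$ with $t_0=s+1_{\alpha_0}$ and with $x_0$ an arbitrary vector $y\in K^n$: the resulting solution $x$ is defined at $s$ and satisfies $y=x(s+1_{\alpha_0})=A_{\alpha_0}(s)x(s)+b_{\alpha_0}(s)$, so $y-b_{\alpha_0}(s)\in\operatorname{Im}A_{\alpha_0}(s)$; since $y$ is arbitrary, $A_{\alpha_0}(s)$ is surjective, hence invertible. This is exactly $i)$.

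For $i)\Rightarrow iv)$, note first that, by the reduction above, $i)$ is statement $i)$ of Theorem \ref{alfa.t4}, so that theorem provides, for each $(t_0,x_0)$, a unique global solution $\tilde x\colon\mathbb{Z}^m\to K^n$ with $\tilde x(t_0)=x_0$; its restriction to $\{t\ge t_1\}$ settles the existence half of $iv)$. For uniqueness, let $x\colon\{t\ge t_1\}\to K^n$ be any solution with $x(t_0)=x_0$. Since $t_1\le t_0$, I can join $t_0$ to $t_1$ by a path $t_0=s_0\to s_1\to\cdots\to s_N=t_1$ lying in the box $\{\,s\in\mathbb{Z}^m : t_1\le s\le t_0\,\}$ and decreasing one coordinate at each step, say $s_{j+1}=s_j-1_{\alpha_j}$; the recurrence then reads $x(s_j)=A_{\alpha_j}(s_{j+1})x(s_{j+1})+b_{\alpha_j}(s_{j+1})$ with $A_{\alpha_j}(s_{j+1})$ invertible (part of $i)$), so $x(s_{j+1})$ is uniquely determined by $x(s_j)$. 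Inductively $x(t_1)=\tilde x(t_1)$; since (\ref{ecalfat5.1}) and (\ref{ecalfat5.2}) hold on $\{t\ge t_1\}$, Theorem \ref{alfa.t3} applies there, and its uniqueness statement forces $x=\tilde x$ on $\{t\ge t_1\}$ (equivalently, one checks this directly by induction on $\sum_\alpha(t^\alpha-t_1^\alpha)$). This closes the cycle and proves that $i)$--$v)$ are equivalent.

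The only points that require a genuine, if short, argument are the two that go beyond a literal translation of Theorem \ref{alfa.t4}: extracting invertibility of the matrices $A_\alpha(t)$ from the pure existence statement $ii)$, and producing the uniqueness in $iv)$ by running the recurrence backwards. Both rest on the same feature — once the $A_\alpha(t)$ are invertible, the one-step maps $x(t)\mapsto x(t+1_\alpha)$ are bijections of $K^n$ — which is exactly what is absent in the general framework of Theorem \ref{alfa.t4}, and which is why the implication chain has to be re-routed rather than merely quoted.
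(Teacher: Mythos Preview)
Your proof is correct and follows essentially the same route as the paper. The paper simply asserts that the equivalence of $i)$, $iii)$, $iv)$, $v)$ follows from Theorem~\ref{alfa.t4}, then notes $iii)\Rightarrow ii)$ is obvious and proves $ii)\Rightarrow i)$ by exactly your surjectivity argument; your version is a bit more careful in that you spell out the implication $i)\Rightarrow iv)$ (running the recurrence backwards to pin down $x(t_1)$, then invoking Theorem~\ref{alfa.t3}), whereas the paper tacitly identifies $iv)$ here with $iii)$ of Theorem~\ref{alfa.t4} even though the latter only states existence.
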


\begin{proof}
The equivalence of the statements
$i)$, $iii)$, $iv)$, $v)$ follows from the Theorem \ref{alfa.t4}.
Since the implication $iii) \Longrightarrow ii)$ is obvious,
it is sufficient to prove the implication $ii) \Longrightarrow i)$.

$ii)\Longrightarrow i)$:
The relations {\rm (\ref{ecalfat5.1})}, {\rm (\ref{ecalfat5.2})}
follow from the Theorem \ref{alfa.t5}.

Let $F_{\alpha} \colon \mathbb{Z}^m \times K^n \to K^n$,
$F_{\alpha}(t,x)=A_{\alpha}(t)x+b_\alpha(t)$,
$\forall (t,x) \in \mathbb{Z}^m \times K^n$.

Let $\alpha_0 \in \{ 1,2, \ldots, m \}$ and $t_0 \in \mathbb{Z}^m$.

Let $y \in K^n$. There exists a function $x(\cdot)$ which verifies
{\rm (\ref{rec.alfa.lin})}, $\forall t \geq t_0-1_{\alpha_0}$
and the condition $x(t_0)=y$.

For $t = t_0-1_{\alpha_0}$, one obtains
$y=F_{\alpha_0}(t_0-1_{\alpha_0}, x(t_0-1_{\alpha_0}))$.
Since $y$ is arbitrary, it follows that
$F_{\alpha_0} (t_0-1_{\alpha_0},\cdot)$ is surjective,
which is equivalent to that the matrix
$A_{\alpha_0}(t_0-1_{\alpha_0})$ is invertible.
Since $t_0$ is arbitrary, it follows that, for any $t \in \mathbb{Z}^m$,
the matrices $A_{\alpha_0}(t)$ are invertible;
here also $\alpha_0 \in \{ 1,2, \ldots, m \}$
is arbitrary.
\end{proof}

\begin{remark}
\label{compatconst}
If the functions $A_{\alpha}(\cdot)$ and $b_{\alpha}(\cdot)$ are constants,
then the relations
{\rm (\ref{ecalfat5.1})}, {\rm (\ref{ecalfat5.2})}
become
\begin{equation}
\label{ecalfat5.1cst}
A_\alpha A_\beta
=
A_\beta A_\alpha
\end{equation}
\begin{equation}
\label{ecalfat5.2cst}
(A_\alpha-I_n)b_\beta
=
(A_\beta-I_n)b_\alpha.
\end{equation}
\end{remark}

\section{Fundamental (transition) matrix}

We denote by $\mathcal{Z}$ one of the sets
$\mathbb{Z}^m$ or
$ \big\{ t \in \mathbb{Z}^m \, \big|\,
t \geq t_1 \big\}$  (with $t_1 \in \mathbb{Z}^m$).

Consider the functions
$A_\alpha \colon \mathcal{Z} \to \mathcal{M}_{n}(K)$,
$\alpha \in \{1,2, \ldots, m\}$,
which define the linear homogeneous recurrence
\begin{equation}
\label{rec.alfa.omog}
x(t+1_\alpha) = A_\alpha(t)x(t),
\quad
\forall \alpha \in \{1,2, \ldots, m\},
\end{equation}
with the unknown function
$x \colon \big\{ t \in \mathcal{Z} \, \big|\,
t \geq t_0 \big\} \to K^n=\mathcal{M}_{n,1}(K)$,
$t_0 \in \mathcal{Z}$.

\begin{proposition}
\label{alfa.o3}
Suppose that the relations {\rm (\ref{ecalfat5.1})} hold true.

For each $t_0 \in \mathcal{Z}$ and $X_0 \in \mathcal{M}_n(K)$
there exists a unique matrix solution
$$X \colon
\big\{ t \in \mathcal{Z} \, \big|\, t \geq t_0 \big\}
\to \mathcal{M}_n(K)$$
of the recurrence
\begin{equation}
\label{ecalfao3.1}
X(t+1_\alpha) = A_\alpha(t)X(t),
\quad
\forall \alpha \in \{1,2, \ldots, m\},
\end{equation}
with the condition $X(t_0)=X_0$.
\end{proposition}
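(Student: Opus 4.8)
The plan is to reduce the matrix recurrence (\ref{ecalfao3.1}) to the vector recurrence (\ref{rec.alfa.lin}) already settled in Theorem \ref{alfa.t5}, by working column by column. First I would write the initial datum in terms of its columns, $X_0 = \big( x_0^{(1)} \ \big|\ x_0^{(2)} \ \big|\ \cdots \ \big|\ x_0^{(n)} \big)$ with $x_0^{(j)} \in K^n$. The key observation is that a function $X \colon \{ t \in \mathcal{Z} \mid t \geq t_0 \} \to \mathcal{M}_n(K)$, written columnwise as $X(t) = \big( x^{(1)}(t) \ \big|\ \cdots \ \big|\ x^{(n)}(t) \big)$, satisfies (\ref{ecalfao3.1}) together with $X(t_0)=X_0$ if and only if each column map $x^{(j)}(\cdot)$ satisfies the homogeneous recurrence $x^{(j)}(t+1_\alpha)=A_\alpha(t)x^{(j)}(t)$ for all $\alpha$, together with $x^{(j)}(t_0)=x_0^{(j)}$. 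This is immediate, since multiplication by $A_\alpha(t)$ on the left acts on each column independently.

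Next I would invoke Theorem \ref{alfa.t5}$b$ in the homogeneous case, that is, with $b_\alpha \equiv 0$. Then the compatibility relation (\ref{ecalfat5.2}) reduces to $0=0$ and is automatic, while (\ref{ecalfat5.1}) is exactly the standing hypothesis of the Proposition. Hence for each $j \in \{1,2,\ldots,n\}$ there exists a unique function $x^{(j)} \colon \{ t \in \mathcal{Z} \mid t \geq t_0 \} \to K^n$ with $x^{(j)}(t+1_\alpha)=A_\alpha(t)x^{(j)}(t)$ for all $t \geq t_0$ and all $\alpha$, and $x^{(j)}(t_0)=x_0^{(j)}$.

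Finally I would assemble $X(t) := \big( x^{(1)}(t) \ \big|\ \cdots \ \big|\ x^{(n)}(t) \big)$. By the equivalence noted in the first step, this $X(\cdot)$ solves (\ref{ecalfao3.1}) with $X(t_0)=X_0$, giving existence. For uniqueness, any matrix solution $\widetilde X(\cdot)$ has columns satisfying the same vector recurrence and the same initial values $x_0^{(j)}$, so the uniqueness clause of Theorem \ref{alfa.t5}$b$ forces each column of $\widetilde X$ to coincide with $x^{(j)}$, whence $\widetilde X = X$.

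There is no substantial obstacle here; the only points needing (routine) care are that passing to $b_\alpha \equiv 0$ makes the compatibility hypotheses of Theorem \ref{alfa.t5} trivial or identical to the assumption, and that the matrix equation genuinely decouples into $n$ independent vector equations. Alternatively, one may identify $\mathcal{M}_n(K)$ with $K^{n^2}$ and view (\ref{ecalfao3.1}) as a single vector recurrence whose coefficient maps are the block-diagonal matrices $\widetilde A_\alpha(t) = \mathrm{diag}\big(A_\alpha(t),\ldots,A_\alpha(t)\big)$; the relation (\ref{ecalfat5.1}) for the $\widetilde A_\alpha$ follows at once from that for the $A_\alpha$, and Theorem \ref{alfa.t5}$b$ then applies directly.
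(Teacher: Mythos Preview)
Your proposal is correct and follows essentially the same approach as the paper: the paper's proof is the single sentence ``For the $n$ recurrences which are equivalent to the matrix recurrence, we apply the Theorem~\ref{alfa.t5},'' which is precisely your column-by-column reduction to the homogeneous case of Theorem~\ref{alfa.t5}$b$. Your write-up simply makes explicit the details (the trivial verification of (\ref{ecalfat5.2}) when $b_\alpha\equiv 0$, and the uniqueness argument) that the paper leaves implicit.
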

\begin{proof}
For the $n$ recurrences which are equivalent to the matrix recurrence,
we apply the Theorem {\rm \ref{alfa.t5}}.
\end{proof}

For each $t_0 \in \mathcal{Z}$, we denote
$$\chi(\,\cdot\, , t_0) \colon
\big\{ t \in \mathcal{Z} \, \big|\, t \geq t_0 \big\}
\to \mathcal{M}_n(K),$$
the unique matrix solution of the recurrence {\rm (\ref{ecalfao3.1})}
which verifies $X(t_0)=I_n$.

\begin{definition}
\label{alfa.d1}
Suppose that the relations {\rm (\ref{ecalfat5.1})} hold true.

The matrix function
$$
\chi(\,\cdot\, , \cdot\,)
\colon \big\{ (t,s) \in \mathcal{Z} \times \mathcal{Z}\, \big|\,
t \geq s \big\} \to \mathcal{M}_n(K)
$$
is called fundamental (transition) matrix associated to the linear homogeneous recurrence
{\rm (\ref{rec.alfa.omog})}.
\end{definition}

For $\alpha \in \{1,2, \ldots, m\}$
and $k \in \mathbb{N}$, we define the function
$$
C_{\alpha,\, k} \colon \mathcal{Z} \to \mathcal{M}_{n}(K),
$$
\begin{equation}
\label{Cdef}
C_{\alpha,\, k}(t)=
\left
\{\begin{array}{ccc}
\displaystyle \prod_{j=1}^k A_\alpha(t+(k-j)\cdot 1_\alpha) & \hbox{if} &k \geq 1\\
I_n & \hbox{if}& k=0.
\end{array}
\right.
\end{equation}

\begin{proposition}
\label{alfa.p7}
Suppose that the relations {\rm (\ref{ecalfat5.1})} hold true.

The matrix functions $\chi(\cdot)$ and $C_{\alpha,k}(\cdot)$ have the properties:

$a)$ $\chi(t,s)\chi(s,r)=\chi(t,r)$,\,\,
$\forall t,s,r \in  \mathcal{Z}$, with $t\geq s\geq r$.

$b)$ $\chi(s,s)=I_n$,\,\, $\forall s \in \mathcal{Z}$.

$c)$ $\chi(t+k \cdot 1_\alpha , s)=
C_{\alpha,\, k}(t) \cdot \chi(t,s)$,\,\,
$\forall k \in \mathbb{N}$,
$\forall t,s \in  \mathcal{Z}$, with $t\geq s$.

$d)$ $C_{\alpha,\, k}(t)=\chi(t+k \cdot 1_\alpha , t)$,\,\,
$\forall k \in \mathbb{N}$, $\forall t \in  \mathcal{Z}$.

$e)$ $C_{\alpha,\, k}(t+p \cdot 1_{\beta})
C_{\beta,\, p}(t)
=
C_{\beta,\, p}(t+k \cdot 1_\alpha)
C_{\alpha,\, k}(t)$,\,\,
$\forall k,p \in \mathbb{N}$, $\forall t \in \mathcal{Z}$.

$f)$ For any $t, s \in \mathcal{Z}$ with $t \geq s$, we have
\begin{equation*}
\chi(t,s)=
\prod_{\alpha =1}^{m-1}
C_{\alpha, t^\alpha - s^\alpha}(s^1,...,s^\alpha, t^{\alpha +1},...,t^m)
\cdot
C_{m,\, t^m-s^m}(s^1,s^2,\ldots, s^{m-1},s^m).
\end{equation*}

$g)$ For any $t, s \in \mathcal{Z}$ with $t \geq s$, the fundamental matrix
$\chi(t,s)$ is invertible if and only if, for any $\alpha \in \{1,2, \ldots, m\}$
and any $t\in \mathcal{Z}$, the matrix $A_{\alpha}(t)$ is invertible.

$h)$ For any $\alpha \in \{1,2, \ldots, m\}$,
any $k\in \mathbb{N}$ and for any $t\in \mathcal{Z}$,
$C_{\alpha,\, k}(t)$ is invertible if and only if, for any $\alpha \in \{1,2, \ldots, m\}$
and any $t\in \mathcal{Z}$, the matrix $A_{\alpha}(t)$ is invertible.

$i)$ If $\forall \alpha \in \{1,2, \ldots, m\}$,
$\forall t\in \mathcal{Z}$, the matrix $A_{\alpha}(t)$ is invertible, then
$\forall t,s,t_0 \in  \mathcal{Z}$, with $t\geq s\geq t_0$, we have
$\chi(t,s)=\chi(t,t_0)\chi(s,t_0)^{-1}$.

$j)$ If $\forall \alpha \in \{1,2, \ldots, m\}$,
the matrix functions $A_{\alpha}(\cdot)$ are constant, then
\begin{equation*}
C_{\alpha,\, k}(t)=A_{\alpha}^k,
\quad
\forall k\in \mathbb{N},\,
\forall t\in \mathbb{Z}^m,\,
\forall \alpha \in \{1,2, \ldots, m\},
\end{equation*}
\begin{equation*}
\chi(t,s)=
A_1^{(t^1-s^1)}A_2^{(t^2-s^2)}
\cdot
\ldots
\cdot
A_m^{(t^m-s^m)},
\quad
\forall
t, s \in \mathbb{Z}^m,\,\,
\mbox{with}\,\, t \geq s.
\end{equation*}
\end{proposition}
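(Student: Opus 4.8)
The plan is to order the items by logical dependency and to use the existence and uniqueness statement of Proposition~\ref{alfa.o3} (which rests on the standing hypothesis (\ref{ecalfat5.1})) as the only real input, obtaining everything else by induction on the number of unit steps and by telescoping the cocycle identity. Item $b)$ is immediate: by construction $\chi(\cdot,s)$ is the matrix solution of (\ref{ecalfao3.1}) with value $I_n$ at $s$. For $a)$, fix $s,r\in\mathcal Z$ with $s\geq r$ and compare, for $t\geq s$, the two matrix functions $t\mapsto\chi(t,s)\chi(s,r)$ and $t\mapsto\chi(t,r)$. The first satisfies (\ref{ecalfao3.1}) because right multiplication by the constant matrix $\chi(s,r)$ commutes with left multiplication by $A_\alpha(t)$; the second does by definition; and both equal $\chi(s,r)$ at $t=s$ by $b)$. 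Uniqueness in Proposition~\ref{alfa.o3} (with base point $s$ and initial value $\chi(s,r)$) forces them to agree, which is $a)$.

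For $d)$ I would induct on $k$: the case $k=0$ is $I_n=\chi(t,t)$, and for the step, putting $s=t$ in (\ref{ecalfao3.1}) gives $\chi(t+(k+1)\cdot1_\alpha,t)=A_\alpha(t+k\cdot1_\alpha)\,\chi(t+k\cdot1_\alpha,t)=A_\alpha(t+k\cdot1_\alpha)\,C_{\alpha,k}(t)$, and reindexing the ordered product in (\ref{Cdef}) shows the right-hand side is $C_{\alpha,k+1}(t)$ (its leftmost factor is $A_\alpha(t+k\cdot1_\alpha)$, and what remains is exactly $C_{\alpha,k}(t)$). Then $c)$ follows from $d)$ and $a)$ via $\chi(t+k\cdot1_\alpha,s)=\chi(t+k\cdot1_\alpha,t)\chi(t,s)=C_{\alpha,k}(t)\chi(t,s)$, the chain $t+k\cdot1_\alpha\geq t\geq s$ lying in $\mathcal Z$. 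For $e)$, rewrite both sides with $d)$: the left side is $\chi(t+k\cdot1_\alpha+p\cdot1_\beta,\,t+p\cdot1_\beta)\,\chi(t+p\cdot1_\beta,\,t)$ and the right side is $\chi(t+k\cdot1_\alpha+p\cdot1_\beta,\,t+k\cdot1_\alpha)\,\chi(t+k\cdot1_\alpha,\,t)$; by $a)$ each collapses to $\chi(t+k\cdot1_\alpha+p\cdot1_\beta,\,t)$.

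The core of the argument is $f)$. I would join $s$ to $t$ along the points $p_0=t$ and $p_\alpha=(s^1,\ldots,s^\alpha,t^{\alpha+1},\ldots,t^m)$ for $\alpha=1,\ldots,m$, so that $p_m=s$. Since $t\geq s$, consecutive points satisfy $p_{\alpha-1}=p_\alpha+(t^\alpha-s^\alpha)\cdot1_\alpha$ with $t^\alpha-s^\alpha\geq0$, hence $p_0\geq p_1\geq\cdots\geq p_m$, and all $p_\alpha$ lie in $\mathcal Z$ because each coordinate is either an $s^i$ or a $t^i$. Iterating $a)$ gives $\chi(t,s)=\prod_{\alpha=1}^m\chi(p_{\alpha-1},p_\alpha)$ with the factors read left to right for $\alpha=1,\ldots,m$, and $d)$ identifies the $\alpha$-th factor as $C_{\alpha,\,t^\alpha-s^\alpha}(p_\alpha)=C_{\alpha,\,t^\alpha-s^\alpha}(s^1,\ldots,s^\alpha,t^{\alpha+1},\ldots,t^m)$, the $\alpha=m$ term being $C_{m,\,t^m-s^m}(s^1,\ldots,s^{m-1},s^m)$; this is the asserted formula. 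I expect the only delicate point of the whole proof to be the bookkeeping of indices and of the order of the noncommuting factors in this telescoping.

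The remaining items are short corollaries. For $g)$: if every $A_\alpha(t)$ is invertible then each $C_{\alpha,k}(t)$ is, by (\ref{Cdef}), a product of invertible matrices, hence so is $\chi(t,s)$ by $f)$; conversely $k=1$ in $d)$ gives $A_\alpha(t)=C_{\alpha,1}(t)=\chi(t+1_\alpha,t)$, so invertibility of all $\chi(t,s)$ implies invertibility of all $A_\alpha(t)$. Item $h)$ is the same dichotomy read directly off (\ref{Cdef}) (with $C_{\alpha,0}=I_n$ for the trivial case). For $i)$, under the invertibility hypothesis $a)$ gives $\chi(t,t_0)=\chi(t,s)\chi(s,t_0)$, and since $\chi(s,t_0)$ is invertible by $g)$, right multiplication by its inverse yields $\chi(t,s)=\chi(t,t_0)\chi(s,t_0)^{-1}$. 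Finally, for $j)$, constancy of the $A_\alpha$ reduces (\ref{Cdef}) to $C_{\alpha,k}(t)=A_\alpha^k$, and substituting this into the formula of $f)$ gives $\chi(t,s)=A_1^{t^1-s^1}A_2^{t^2-s^2}\cdots A_m^{t^m-s^m}$.
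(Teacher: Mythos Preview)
Your proposal is correct and follows essentially the same route as the paper: both use the uniqueness statement of Proposition~\ref{alfa.o3} for $a)$, an induction on $k$ together with the cocycle identity for $c)$--$e)$, a coordinate-by-coordinate telescoping path from $t$ to $s$ for $f)$, and then read off $g)$--$j)$ as immediate corollaries. The only cosmetic difference is that you prove $d)$ first by induction and then deduce $c)$ from $d)$ and $a)$, whereas the paper proves $c)$ first by induction and obtains $d)$ by specializing $s=t$; your ordering is arguably a little cleaner but the content is the same.
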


\begin{proof}
$b)$ It follows directly from the definition of the function $\chi(\cdot , \cdot)$.

$a)$ We fix $s,r$, with $s\geq r$.
Let $Y_1,Y_2  \colon
\big\{ t \in \mathcal{Z} \, \big|\, t \geq s \big\} \to \mathcal{M}_n(K)$,
$$
Y_1(t)=\chi(t,s)\chi(s,r),
\quad
Y_2(t)=\chi(t,r),
\quad
\forall t \geq s.
$$
Then
$$
Y_1(t+1_{\alpha})=\chi(t+1_{\alpha},s)\chi(s,r)
=
A_{\alpha}(t)\chi(t,s)\chi(s,r)
=A_{\alpha}(t)Y_1(t);
$$
$$
Y_1(s)=\chi(s,s)\chi(s,r)=
I_n\chi(s,r)=\chi(s,r)=Y_2(s).
$$
It follows that the functions
$Y_1(\cdot)$ and $Y_2(\cdot)$
are both solutions of the recurrence
{\rm (\ref{ecalfao3.1})}
and coincide for $t=s$. From uniqueness property,
it follows that $Y_1(\cdot)$ and $Y_2(\cdot)$
coincide; hence $\chi(t,s)\chi(s,r)=\chi(t,r)$,
$\forall t\geq s$.

$c)$ Induction after $k$:

For $k=0$, the statement is obvious.

For $k=1$: the equality
$\chi(t+ 1_\alpha , s)=C_{\alpha,\, 1}(t)\cdot \chi(t,s)$
is equivalent to
$\chi(t+ 1_\alpha , s)=A_{\alpha}(t)\chi(t,s)$,
that is true, according the definition of $\chi(\cdot, s)$.

Let $k \geq 2$. Suppose the statement is true for $k-1$ and we shall prove for $k$.
\begin{equation*}
\chi(t+k \cdot 1_\alpha , s)
=
A_{\alpha}(t+(k-1)\cdot 1_\alpha)
\chi(t+(k-1) \cdot 1_\alpha , s)
\end{equation*}
\begin{equation*}
=
A_{\alpha}(t+(k-1)\cdot 1_\alpha)
C_{\alpha,\, k-1}(t) \cdot \chi(t,s)
\end{equation*}
\begin{equation*}
=
A_{\alpha}(t+(k-1)\cdot 1_\alpha)
\cdot
A_{\alpha}(t+(k-2)\cdot 1_\alpha)
\cdot \ldots \cdot A_{\alpha}(t+ 1_\alpha)A_{\alpha}(t)
\cdot \chi(t,s)
\end{equation*}
\begin{equation*}
=C_{\alpha,\, k}(t)\chi(t,s).
\end{equation*}

$d)$ In the equality from the step $c)$, we set $s=t$.

We obtain
$\chi(t+k \cdot 1_\alpha , t)=C_{\alpha,\, k}(t)\chi(t,t)
=C_{\alpha,\, k}(t)$.

$e)$ We use the step $d)$.
$$
C_{\alpha,\, k}(t+p \cdot 1_{\beta})
C_{\beta,\, p}(t)
=
$$
$$
=
\chi(t+p \cdot 1_{\beta}+k \cdot 1_\alpha , t+p \cdot 1_{\beta})
\chi(t+p \cdot 1_{\beta} , t)
=
\chi(t+p \cdot 1_{\beta}+k \cdot 1_\alpha , t).
$$
Analogously, one shows that
$C_{\beta,\, p}(t+k \cdot 1_\alpha)C_{\alpha,\, k}(t)
=
\chi(t+k \cdot 1_\alpha+p \cdot 1_{\beta} , t)$.

$f)$ One uses the step $c)$.
\begin{equation*}
\chi(t,s)=\chi(t-(t^1-s^1)\cdot 1_{1}+(t^1-s^1) \cdot 1_{1} , s)
=
\end{equation*}
\begin{equation*}
=
C_{1,\, t^1-s^1}(t-(t^1-s^1)\cdot 1_{1}) \cdot
\chi(t-(t^1-s^1)\cdot 1_{1},s)=
\end{equation*}
\begin{equation*}
=
C_{1,\, t^1-s^1}(s^1,t^2,\ldots, t^m)
\chi \big((s^1,t^2,\ldots, t^m),s \big)=
\end{equation*}
\begin{equation*}
=
C_{1,\, t^1-s^1}(s^1,t^2,\ldots, t^m)
\chi \big(
(s^1,t^2,\ldots, t^m)-(t^2-s^2)\cdot 1_{2}+(t^2-s^2) \cdot 1_{2},s
\big)=
\end{equation*}
\begin{equation*}
=
C_{1,\, t^1-s^1}(s^1,t^2,\ldots, t^m)
C_{2,\, t^2-s^2}
\big(
(s^1,t^2,\ldots, t^m)-(t^2-s^2)\cdot 1_{2}
\big)
\cdot
\end{equation*}
\begin{equation*}
\cdot
\chi
\big(
(s^1,t^2,\ldots, t^m)-(t^2-s^2)\cdot 1_{2} ,s
\big)=
\end{equation*}
\begin{equation*}
=
C_{1,\, t^1-s^1}(s^1,t^2,\ldots, t^m)
\cdot
C_{2,\, t^2-s^2}(s^1,s^2,t^3, \ldots, t^m)
\cdot
\chi
\big((s^1,s^2,t^3 \ldots, t^m),s \big)
=
\end{equation*}
\begin{equation*}
=\ldots =
C_{1,\, t^1-s^1}(s^1,t^2,\ldots, t^m)
\cdot
C_{2,\, t^2-s^2}(s^1,s^2,t^3,\ldots, t^m) \cdot
\end{equation*}
\begin{equation*}
\cdot
\ldots
\cdot
C_{m-1,\, t^{m-1}-s^{m-1}}(s^1,s^2,\ldots, s^{m-1},t^m)
\cdot
\chi
\big((s^1,s^2,\ldots, s^{m-1},t^m),s \big)=
\end{equation*}
\begin{equation*}
=
C_{1,\, t^1-s^1}(s^1,t^2,\ldots, t^m)
\cdot
C_{2,\, t^2-s^2}(s^1,s^2,t^3,\ldots, t^m)
\cdot
\end{equation*}
\begin{equation*}
\cdot
\ldots
\cdot
C_{m-1,\, t^{m-1}-s^{m-1}}(s^1,s^2,\ldots, s^{m-1},t^m)
\cdot
\end{equation*}
\begin{equation*}
\cdot
\chi
\big((s^1,s^2,\ldots, s^{m-1},t^m)-(t^m-s^m)\cdot 1_{m}
+(t^m-s^m)\cdot 1_{m},s \big)=
\end{equation*}
\begin{equation*}
=
C_{1,\, t^1-s^1}(s^1,t^2,\ldots, t^m)
\cdot
C_{2,\, t^2-s^2}(s^1,s^2,t^3,\ldots, t^m)
\cdot
\end{equation*}
\begin{equation*}
\cdot
\ldots
\cdot
C_{m-1,\, t^{m-1}-s^{m-1}}(s^1,s^2,\ldots, s^{m-1},t^m)
\cdot
\end{equation*}
\begin{equation*}
\cdot
C_{m,\, t^m-s^m}
\big(
(s^1,s^2,\ldots, s^{m-1},t^m)-(t^m-s^m)\cdot 1_{m}
\big)
\cdot
\end{equation*}
\begin{equation*}
\cdot
\chi
\big((s^1,s^2,\ldots, s^{m-1},t^m)-(t^m-s^m)\cdot 1_{m},s
\big)=
\end{equation*}
\begin{equation*}
=
C_{1,\, t^1-s^1}(s^1,t^2,\ldots, t^m)
\cdot
C_{2,\, t^2-s^2}(s^1,s^2,t^3,\ldots, t^m)
\cdot
\end{equation*}
\begin{equation*}
\cdot
\ldots
\cdot
C_{m-1,\, t^{m-1}-s^{m-1}}(s^1,s^2,\ldots, s^{m-1},t^m)
\cdot
C_{m,\, t^m-s^m}(s^1,s^2,\ldots, s^{m-1},s^m)
\cdot
\end{equation*}
\begin{equation*}
\cdot
\chi
\big((s^1,s^2,\ldots, s^{m-1},s^m),s
\big);
\end{equation*}
we obtain the desired identity since $\chi(s,s)=I_n$.

$g)$ and $h)$ If all the matrices
$\chi(\cdot, \cdot)$ are invertible, then from the equality
$\chi(t+1_\alpha,s)= A_\alpha(t)\chi(t,s)$
it follows that $A_\alpha(t)$ is invertible.

If all matrices $A_\alpha(t)$ are invertible, then $C_{\alpha,\, k}(t)$ is invertible
since $C_{\alpha,\, k}(t)$ is either $I_n$,
or a product of the matrices $A_\alpha(\cdot)$.

If all the matrices $C_{\alpha,\, k}(t)$
are invertible, then $\chi(t, s)$
is invertible since $\chi(t, s)$ is a product of matrices $C_{\alpha,\, k}(\cdot)$,
according the step $f)$.

$i)$ The matrix $\chi(s,t_0)$ is invertible.

From the relation $\chi(t,s)\chi(s,t_0)=\chi(t,t_0)$,
we obtain
$$\chi(t,s)=\chi(t,t_0)\chi(s,t_0)^{-1}.$$

$j)$ The relation $C_{\alpha,\, k}(t)=A_{\alpha}^k$
follows directly from the definition of $C_{\alpha,\, k}(t)$.
The second equality required
is obtained using the step $f)$.
\end{proof}

\begin{example}
\label{alfa.exmp3}
The formula at point $f)$ express
explicitly the fundamental matrix $\chi(t,s)$
in function of the matrices $A_{\alpha}(\cdot)$.

For example, for $m=2$, $t=(t^1,t^2)$, $s=(s^1,s^2)$,
with $t^1 \geq s^1+1$, $t^2 \geq s^2+1$, one obtains
\begin{equation*}
\chi(t,s)=
C_{1,\, t^1-s^1}(s^1,t^2)\cdot C_{2,\, t^2-s^2}(s^1,s^2)
=
\end{equation*}
\begin{equation*}
=
A_{1}(t^1-1,t^2)A_{1}(t^1-2,t^2)
\cdot \ldots \cdot A_{1}(s^1+1,t^2)A_{1}(s^1,t^2)
\cdot
\end{equation*}
\begin{equation*}
\cdot
A_{2}(s^1,t^2-1)A_{2}(s^1,t^2-2)
\cdot \ldots \cdot A_{2}(s^1,s^2+1)A_{2}(s^1,s^2).
\end{equation*}
\end{example}

The following result can be proved easily by direct computation.
\begin{proposition}
\label{alfa.p8}
We denote by $\mathcal{Z}$ one of the sets
$\mathbb{Z}^m$ or
$ \big\{ t \in \mathbb{Z}^m \, \big|\,
t \geq t_1 \big\}$  (with $t_1 \in \mathbb{Z}^m$).

We consider the functions $A_\alpha \colon \mathcal{Z} \to \mathcal{M}_{n}(K)$,
$\alpha \in \{1,2, \ldots, m\}$, for which the relations {\rm (\ref{ecalfat5.1})} are
satisfied. Let $(t_0,x_0) \in \mathcal{Z} \times K^n$.
Then, the unique function
$x \colon \big\{ t \in \mathcal{Z} \, \big|\,
t \geq t_0 \big\} \to K^n=\mathcal{M}_{n,1}(K)$,
which, for any $t \geq t_0$, verifies the recurrence {\rm (\ref{rec.alfa.omog})}
and the initial condition $x(t_0)=x_0$, is
\begin{equation*}
x(t)=\chi(t,t_0)x_0,
\quad
\forall t \geq t_0.
\end{equation*}

If $\forall \alpha \in \{1,2, \ldots, m\}$,
the matrix functions $A_{\alpha}(\cdot)$ are constant,
then
\begin{equation}
\label{solautconst}
x(t)=
A_1^{(t^1-t_0^1)}A_2^{(t^2-t_0^2)}
\cdot
\ldots
\cdot
A_m^{(t^m-t_0^m)}x_0,
\quad
\forall t \geq t_0.
\end{equation}
\end{proposition}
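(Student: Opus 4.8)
The plan is to reduce the claim to the matrix-valued existence/uniqueness result already established, and then identify the solution explicitly via the fundamental matrix.

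First I would invoke Theorem \ref{alfa.t5}$(b)$ (whose hypothesis \eqref{ecalfat5.1} is assumed here, and \eqref{ecalfat5.2} is trivially satisfied since all $b_\alpha\equiv 0$ for the homogeneous recurrence \eqref{rec.alfa.omog}): for the given $(t_0,x_0)$ there exists a unique $x\colon\{t\in\mathcal Z\mid t\geq t_0\}\to K^n$ satisfying \eqref{rec.alfa.omog} with $x(t_0)=x_0$. So the only thing left is to verify that the candidate $y(t):=\chi(t,t_0)x_0$ is such a solution; uniqueness then forces $x=y$.

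Next I would check the two defining conditions for $y(\cdot)$. For the initial condition, Proposition \ref{alfa.p7}$(b)$ gives $\chi(t_0,t_0)=I_n$, hence $y(t_0)=I_nx_0=x_0$. For the recurrence, fix $t\geq t_0$ and $\alpha\in\{1,2,\ldots,m\}$. By the defining property of the fundamental matrix (it is the matrix solution of \eqref{ecalfao3.1} with value $I_n$ at $t_0$), we have $\chi(t+1_\alpha,t_0)=A_\alpha(t)\,\chi(t,t_0)$; multiplying on the right by $x_0$ yields
\begin{equation*}
y(t+1_\alpha)=\chi(t+1_\alpha,t_0)x_0=A_\alpha(t)\,\chi(t,t_0)\,x_0=A_\alpha(t)\,y(t),
\end{equation*}
which is exactly \eqref{rec.alfa.omog} for $y$. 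Thus $y$ is a solution, and by the uniqueness part of Theorem \ref{alfa.t5}$(b)$ we conclude $x(t)=\chi(t,t_0)x_0$ for all $t\geq t_0$.

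Finally, for the constant-coefficient case, I would simply substitute the explicit form of $\chi(t,t_0)$ from Proposition \ref{alfa.p7}$(j)$, namely $\chi(t,t_0)=A_1^{(t^1-t_0^1)}A_2^{(t^2-t_0^2)}\cdots A_m^{(t^m-t_0^m)}$, into $x(t)=\chi(t,t_0)x_0$ to obtain \eqref{solautconst}. There is no real obstacle here: the whole argument is a direct verification plus an appeal to uniqueness, and the only point requiring a moment's care is confirming that the homogeneous recurrence is covered by Theorem \ref{alfa.t5} with the $b_\alpha$'s taken to be zero so that \eqref{ecalfat5.2} holds automatically.
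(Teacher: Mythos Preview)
Your proposal is correct and is precisely the ``direct computation'' the paper alludes to without spelling out: verify that $y(t)=\chi(t,t_0)x_0$ satisfies the initial condition and the recurrence, then invoke uniqueness from Theorem~\ref{alfa.t5}$(b)$; the constant-coefficient formula then follows from Proposition~\ref{alfa.p7}$(j)$.
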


\begin{remark}
Let us suppose that $A_{\alpha}(\cdot)$, $b_\alpha(\cdot)$
verify the relations
{\rm (\ref{ecalfat5.1})} and {\rm (\ref{ecalfat5.2})}.
Let $y \colon \big\{ t \in \mathcal{Z} \, \big|\,
t \geq t_0 \big\} \to K^n$ a particular solution
of the recurrence {\rm (\ref{rec.alfa.lin})}.

Note that for any other solution of the recurrence
{\rm (\ref{rec.alfa.lin})},
$x \colon \big\{ t \in \mathcal{Z} \, \big|\,
t \geq t_0 \big\} \to K^n$,
the function $x(\cdot)-y(\cdot)$ is a solution
of the  recurrence {\rm (\ref{rec.alfa.omog})}.
Hence, according Proposition {\rm \ref{alfa.p8}}, we find
$x(t)-y(t)=\chi(t,t_0) \big( x(t_0)-y(t_0) \big)$,
$\forall t \geq t_0$.

It follows that the unique function
$x \colon \big\{ t \in \mathcal{Z} \, \big|\,
t \geq t_0 \big\} \to K^n$,
which, for any $t \geq t_0$, verifies the recurrence {\rm (\ref{rec.alfa.lin})}
and the initial condition $x(t_0)=x_0$, is
\begin{equation*}
x(t)=\chi(t,t_0) \big( x_0-y(t_0) \big)+y(t),
\quad
\forall t \geq t_0.
\end{equation*}
\end{remark}

\begin{theorem}
\label{alfa.tt7}
We consider the functions
$A_\alpha \colon \big\{ t \in \mathbb{Z}^m \, \big|\,
t \geq t_0 \big\} \to \mathcal{M}_{n}(K)$,
$\alpha \in \{1,2, \ldots, m\}$ (with $t_0 \in \mathbb{Z}^m$),
for which the relations {\rm (\ref{ecalfat5.1})} are
satisfied. We denote
\begin{equation*}
V(t_0)=
\Big \{
x \colon
\big\{ t \in \mathbb{Z}^m \, \big|\,
t \geq t_0 \big\}
\to K^n \, \Big|\,\,
x\,\, \mbox{{\rm solution of the recurrence}}\,
{\rm (\ref{rec.alfa.omog})}
\Big \}.
\end{equation*}
\noindent
$a)$ The set $V(t_0)$ is a $K$ - vector space of dimension $n$.

\noindent
$b)$ Let  $\big\{ v_1, v_2, \ldots, v_n \big \}$ a basis of $K^n$.
For $j\in \{ 1,2, \ldots, n\}$, we consider
$y_j \colon
\big\{ t \in \mathbb{Z}^m \, \big|\,
t \geq t_0 \big\} \to K^n$
as solution of the recurrence {\rm (\ref{rec.alfa.omog})}
which verifies $y_j(t_0)=v_j$.
Then the set
$\big\{ y_1(\cdot), y_2(\cdot), \ldots, y_n(\cdot) \big \}$
is a basis of the vector space $V(t_0)$.

\noindent
$c)$ If $z_1(t)$, $z_2(t)$, \ldots, $z_n(t)$
are the columns of the matrix $\chi(t,t_0)$ (for $t \geq t_0$),
then
$\big\{ z_1(\cdot), z_2(\cdot), \ldots, z_n(\cdot) \big \}$
is a basis of the vector space $V(t_0)$.
\end{theorem}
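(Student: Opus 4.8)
The plan is to reduce everything to the existence--uniqueness result for the homogeneous recurrence and to the explicit formula $x(t)=\chi(t,t_0)x_0$ from Proposition~\ref{alfa.p8}. The organizing observation is that the evaluation map
\[
\Phi\colon V(t_0)\to K^n,\qquad \Phi(x)=x(t_0),
\]
is a $K$--linear isomorphism; once this is in place, parts $a)$, $b)$, $c)$ all follow by transporting standard linear-algebra facts about $K^n$ through $\Phi$.

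For $a)$, I would first verify that $V(t_0)$ is a linear subspace of the $K$--vector space of all functions $\big\{t\geq t_0\big\}\to K^n$: the zero function solves {\rm (\ref{rec.alfa.omog})}, and if $x,y\in V(t_0)$ and $\lambda\in K$, then $(\lambda x+y)(t+1_\alpha)=\lambda A_\alpha(t)x(t)+A_\alpha(t)y(t)=A_\alpha(t)\big((\lambda x+y)(t)\big)$, using only the linearity of $z\mapsto A_\alpha(t)z$. The map $\Phi$ is then clearly $K$--linear, and it is bijective because, under hypothesis {\rm (\ref{ecalfat5.1})} (with all $b_\alpha\equiv 0$, so that {\rm (\ref{ecalfat5.2})} holds trivially), Theorem~\ref{alfa.t5}$b)$ yields, for each $x_0\in K^n$, exactly one solution $x\in V(t_0)$ with $x(t_0)=x_0$. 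Hence $\Phi$ is an isomorphism and $\dim_K V(t_0)=\dim_K K^n=n$.

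Part $b)$ is then immediate: an isomorphism carries a basis to a basis, so $\Phi^{-1}$ maps the basis $\{v_1,\dots,v_n\}$ of $K^n$ to a basis of $V(t_0)$; but $\Phi^{-1}(v_j)$ is by definition the unique solution $y_j$ of {\rm (\ref{rec.alfa.omog})} with $y_j(t_0)=v_j$, whence $\big\{y_1(\cdot),\dots,y_n(\cdot)\big\}$ is a basis of $V(t_0)$. Part $c)$ is the special case of $b)$ in which $v_j=e_j$ is the standard basis of $K^n$: by Proposition~\ref{alfa.p8} the solution of {\rm (\ref{rec.alfa.omog})} with initial value $e_j$ is $t\mapsto\chi(t,t_0)e_j$, which is precisely the $j$-th column $z_j(t)$ of $\chi(t,t_0)$, so applying $b)$ to this basis gives that $\big\{z_1(\cdot),\dots,z_n(\cdot)\big\}$ is a basis of $V(t_0)$. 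I do not expect any genuine obstacle; the only point requiring a moment's care is confirming that the hypotheses of Theorem~\ref{alfa.t5}$b)$ are actually met in the homogeneous setting, i.e., that {\rm (\ref{ecalfat5.2})} is vacuous when every $b_\alpha$ vanishes, which is clear.
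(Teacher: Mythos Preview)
Your proof is correct and is essentially the same as the paper's, just packaged more conceptually: where the paper verifies spanning and linear independence of $\{y_1,\dots,y_n\}$ by hand (writing $x(t_0)=\sum a_jv_j$, invoking uniqueness to get $x=\sum a_jy_j$, then evaluating a dependence relation at $t_0$), you encode the same two steps as surjectivity and injectivity of the evaluation map $\Phi$. Part $c)$ in both arguments is the specialization to the canonical basis $\{e_j\}$.
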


\begin{proof}
$a)$ and $b)$: One verifies automatically that
$V(t_0)$ is a vector space.

Let $x(\cdot) \in V(t_0)$ and let $x_0=x(t_0)$.
There exist $a_1$, $a_2$, \ldots, $a_n$ $\in K$ such that
\begin{equation*}
x_0=a_1v_1+a_2v_2+\ldots+a_nv_n.
\end{equation*}
Let $y(\cdot)=a_1y_1(\cdot)+a_2y_2(\cdot)+\ldots+a_ny_n(\cdot)$.
Obvious that $y(\cdot)\in V(t_0)$. We find
\begin{equation*}
y(t_0)=a_1y_1(t_0)+a_2y_2(t_0)+\ldots+a_ny_n(t_0)
=
a_1v_1+a_2v_2+\ldots+a_nv_n=x_0
\end{equation*}
Since $x(\cdot)$ and $y(\cdot)$ are solutions of the recurrence {\rm (\ref{rec.alfa.omog})}
and $x(t_0)=y(t_0)=x_0$, by uniqueness property
(Theorem \ref{alfa.t5}), it follows that
$x(\cdot)=y(\cdot)$; consequently
\begin{equation*}
x(\cdot)=a_1y_1(\cdot)+a_2y_2(\cdot)+\ldots+a_ny_n(\cdot).
\end{equation*}
We have proved that $\big\{ y_1(\cdot), y_2(\cdot), \ldots, y_n(\cdot) \big \}$
is a system of generators for the vector space $V(t_0)$.

Let $a_1$, $a_2$, \ldots, $a_n$ $\in K$ such that
$a_1y_1(\cdot)+a_2y_2(\cdot)+\ldots+a_ny_n(\cdot)=0$, i.e.
\begin{equation*}
a_1y_1(t)+a_2y_2(t)+\ldots+a_ny_n(t)=0,
\quad
\forall t\geq t_0.
\end{equation*}

For $t=t_0$, we obtain $\displaystyle \sum_{j=1}^n a_jy_j(t_0)=0$,
i.e., $\displaystyle \sum_{j=1}^n a_jv_j=0$.

Consequently $a_j=0$, $\forall j$.
Hence $ y_1(\cdot), y_2(\cdot), \ldots, y_n(\cdot)$
are linearly independent, i.e.,
$\big\{ y_1(\cdot), y_2(\cdot), \ldots, y_n(\cdot) \big \}$
is a basis of the vector space $V(t_0)$;
the dimension of this space is obviously $n$.

$c)$ Let $\big\{ e_1, e_2, \ldots, e_n \big \}$ be the canonical basis
of the space $K^n$.

Hence $I_n=\Big( e_1 \,\, e_2 \,\, \ldots \,\, e_n  \Big )$.
From the definition of the matrix $\chi(t,t_0)$,
it follows that
$z_j (\cdot)$ is the solution of the recurrence
{\rm (\ref{rec.alfa.omog})}
which verifies $z_j(t_0)=e_j$, $\forall j$.
According the point $b)$, it follows that
$\big\{ z_1(\cdot), z_2(\cdot), \ldots, z_n(\cdot) \big \}$
is a basis of the vector space $V(t_0)$.
\end{proof}

\subsection{Case of non-degenerate matrices} \label{inv}

In this subsection,
we consider the functions
$$
A_\alpha \colon \mathbb{Z}^m \to \mathcal{M}_{n}(K),
\quad
\alpha \in \{1,2, \ldots, m\},
$$
such that
for any
$\alpha\in \{1,2, \ldots, m\}$
and any $t\in \mathbb{Z}^m$,
the matrix $A_{\alpha}(t)$ is invertible
and $\forall t \in \mathbb{Z}^m$,
$\forall \alpha, \beta \in \{1,2, \ldots, m\}$
the relations {\rm (\ref{ecalfat5.1})}
hold.

According the Theorem \ref{alfa.t6},
for any pair
$(t_0,x_0) \in \mathbb{Z}^m \times K^n$,
there exists a unique function
$x \colon \mathbb{Z}^m \to K^n$,
which verifies the recurrence
\begin{equation}
\label{rec.alfa.omog.inv}
x(t+1_\alpha) = A_\alpha(t)x(t),
\quad
\forall t \in \mathbb{Z}^m,
\quad
\forall \alpha \in \{1,2, \ldots, m\},
\end{equation}
and the condition $x(t_0)=x_0$.

In this case the fundamental matrix
can be defined on the set
$\mathbb{Z}^m  \times \mathbb{Z}^m$.
For each $t_0 \in \mathbb{Z}^m$,
$\chi(\,\cdot\, , t_0) \colon \mathbb{Z}^m
\to \mathcal{M}_n(K)$ is the unique matrix solution of the recurrence
\begin{equation*}
X(t+1_\alpha) = A_\alpha(t)X(t),
\quad
\forall \alpha \in \{1,2, \ldots, m\},
\end{equation*}
which verifies $X(t_0)=I_n$. In this way we obtain
the fundamental matrix associated to the recurrence {\rm (\ref{rec.alfa.omog.inv})},
i.e., the function
\begin{equation*}
\chi(\,\cdot\, , \cdot\,) \colon
\mathbb{Z}^m  \times \mathbb{Z}^m \to \mathcal{M}_n(K).
\end{equation*}

The statements in the Proposition \ref{alfa.p7}
are maintained with few changes.

The statement $a)$ rewrites
$\chi(t,s)\chi(s,r)=\chi(t,r)$,\,\,
$\forall t,s,r \in  \mathbb{Z}^m$.
The proof is similar to those given in the proof of Proposition \ref{alfa.p7}.

The point $i)$ becomes
$\chi(t,s)=\chi(t,t_0)\chi(s,t_0)^{-1}$,\,\,
$\forall t,s,t_0 \in  \mathbb{Z}^m$.
For $t_0=t$, we obtain
$\chi(t,s)=\chi(s,t)^{-1}$,\,\,
$\forall t,s \in  \mathbb{Z}^m$.

It may look easy that the point $j)$ can be completed in this way:

{\it if $\forall \alpha \in \{1,2, \ldots, m\}$,
the matrix functions $A_{\alpha}(\cdot)$ are constant, then}
\begin{equation*}
\chi(t,s)=
A_1^{(t^1-s^1)}A_2^{(t^2-s^2)}
\cdot
\ldots
\cdot
A_m^{(t^m-s^m)},
\quad
\forall
t, s \in \mathbb{Z}^m.
\end{equation*}

The analog of the Proposition \ref{alfa.p8} is:

{\it
The solution of the recurrence {\rm (\ref{rec.alfa.omog.inv})}
which verifies $x(t_0)=x_0$, is}
\begin{equation*}
x \colon \mathbb{Z}^m \to K^n,
\quad
x(t)=\chi(t,t_0)x_0,
\quad
\forall t \in \mathbb{Z}^m.
\end{equation*}

{\it
If $\forall \alpha \in \{1,2, \ldots, m\}$,
the matrix functions $A_{\alpha}(\cdot)$ are constants,
then}
\begin{equation*}
x(t)=
A_1^{(t^1-t_0^1)}A_2^{(t^2-t_0^2)}
\cdot
\ldots
\cdot
A_m^{(t^m-t_0^m)}x_0,
\quad
\forall t \in \mathbb{Z}^m.
\end{equation*}

Let $t_0 \in \mathbb{Z}^m$.
We denote
\begin{equation*}
W(t_0)=
\Big \{
x \colon \mathbb{Z}^m \to K^n \, \Big|\,\,
x\,\, \mbox{{\rm solution of the recurrence}}\,
{\rm (\ref{rec.alfa.omog.inv})}
\Big \}.
\end{equation*}

With a proof similar to those for the
Theorem \ref{alfa.tt7}, we obtain the following result.

{\it
\noindent
$a)$ The set $W(t_0)$ is a $K$ - vector space of dimension $n$.

\noindent
$b)$ Let $\big\{ v_1, v_2, \ldots, v_n \big \}$ be a basis of $K^n$.
For $j\in \{ 1,2, \ldots, n\}$, we consider
$y_j \colon \mathbb{Z}^m \to K^n$
as solution of the recurrence {\rm (\ref{rec.alfa.omog.inv})}
which verifies $y_j(t_0)=v_j$.
Then the set
$\big\{ y_1(\cdot), y_2(\cdot), \ldots, y_n(\cdot) \big \}$
is a basis of the space $W(t_0)$.

\noindent
$c)$ If $z_1(t)$, $z_2(t)$, \ldots, $z_n(t)$
are the columns of the matrix fundamental $\chi(t,t_0)$ ($\forall t \in \mathbb{Z}^m$),
then
$\big\{ z_1(\cdot), z_2(\cdot), \ldots, z_n(\cdot) \big \}$
is a basis of the space $W(t_0)$}.

\subsection{Auxiliary result}

\begin{theorem} A solution of the multiple recurrence system
$$x(t+1_\alpha)=A_\alpha(t)x(t), \quad \alpha \in \{1,2, \ldots, m\}$$
is solution of the diagonal recurrence system
$$x(t+{\bf 1}) =A(t)x(t),$$
where
$$A(t) = A_m\left(t+\sum_{\alpha =1}^{m-1}1_\alpha\right)A_{m-1}\left(t+\sum_{\alpha =1}^{m-2}1_\alpha\right)\cdots A_2(t+1_1)A_1(t).$$\end{theorem}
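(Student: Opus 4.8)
The plan is to chain the one-step recurrences in the fixed order $\alpha=1,2,\ldots,m$ and telescope; no commutation hypothesis is needed for this direction. Fix $t$ in the domain on which the functions $A_\alpha$ are defined, and let $x(\cdot)$ be any solution of the system $x(t+1_\alpha)=A_\alpha(t)x(t)$, $\alpha\in\{1,2,\ldots,m\}$. For $k\in\{0,1,\ldots,m\}$ put $s_k=\sum_{\alpha=1}^{k}1_\alpha$ (so that $s_0=0$ and $s_m={\bf 1}$), and observe that $s_k+1_{k+1}=s_{k+1}$.

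First I would prove, by induction on $k\in\{1,\ldots,m\}$, the identity
$$x(t+s_k)=A_k(t+s_{k-1})\,A_{k-1}(t+s_{k-2})\cdots A_1(t)\,x(t).$$
The case $k=1$ is precisely the $\alpha=1$ equation $x(t+1_1)=A_1(t)x(t)$. Assuming the identity for some $k$ with $1\le k\le m-1$, apply the $\alpha=k+1$ equation of the system at the point $t+s_k$, obtaining $x(t+s_k+1_{k+1})=A_{k+1}(t+s_k)\,x(t+s_k)$; substituting the induction hypothesis and using $s_k+1_{k+1}=s_{k+1}$ gives
$$x(t+s_{k+1})=A_{k+1}(t+s_k)\,A_k(t+s_{k-1})\cdots A_1(t)\,x(t),$$
which is the identity for $k+1$. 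Taking $k=m$ and recalling $s_m={\bf 1}$, $s_{m-1}=\sum_{\alpha=1}^{m-1}1_\alpha$, $\ldots$, $s_1=1_1$, $s_0=0$, the right-hand side is exactly $A(t)x(t)$ with $A(t)$ as in the statement, so $x(t+{\bf 1})=A(t)x(t)$ for every admissible $t$; that is, $x(\cdot)$ solves the diagonal recurrence.

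The only thing requiring a moment of care — and the nearest thing to an obstacle — is the bookkeeping of domains: every intermediate argument $t+s_k$ at which an $A_\alpha$ or $x$ is evaluated satisfies $t\le t+s_k\le t+{\bf 1}$, so it lies in the set on which the data and the solution are defined (whether that set is $\mathbb{Z}^m$ or an up-set $\{t\ge t_1\}$), which legitimizes each application of a one-step equation. I would also remark that the order of the factors in $A(t)$ is dictated by the order in which the recurrences were applied along the staircase path from $t$ to $t+{\bf 1}$ that increments coordinate $1$ first, then $2$, and so on; the compatibility relations (\ref{ecalfat5.1}) are not used in this direction, being needed only to show that the resulting $A(t)$ is independent of the chosen monotone lattice path.
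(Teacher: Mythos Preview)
Your proof is correct and follows essentially the same approach as the paper: both telescope along the staircase path $t\to t+1_1\to t+1_1+1_2\to\cdots\to t+{\bf 1}$, applying the one-step recurrence at each intermediate point. The only cosmetic difference is that the paper writes the chain ``top-down'' (starting from $x(t+{\bf 1})$ and peeling off $1_m,1_{m-1},\ldots$) with ellipses, whereas you build it ``bottom-up'' by an explicit induction on $k$; your added remarks on domain bookkeeping and on the irrelevance of (\ref{ecalfat5.1}) for this direction are accurate and not present in the paper's shorter argument.
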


\begin{proof} Taking into account that
$\displaystyle \sum_{\alpha =1}^m 1_\alpha = {\bf 1}$, we can write successively
$$x(t+{\bf 1})= x\left(t+ \sum_{\alpha =1}^m 1_\alpha\right)= x\left(t + \sum_{\alpha =1}^{m-1}1_\alpha +1_m\right)$$
$$=A_m\left(t + \sum_{\alpha =1}^{m-1}1_\alpha \right)x\left(t+\sum_{\alpha =1}^{m-1}1_\alpha \right)$$
$$= A_m\left(t + \sum_{\alpha =1}^{m-1}1_\alpha \right)A_{m-1}\left(t + \sum_{\alpha =1}^{m-2}1_\alpha \right)x\left(t + \sum_{\alpha =1}^{m-2}1_\alpha \right)= ...=A(t)x(t).$$\end{proof}

\section{Solving the linear discrete multitime multiple recurrence
with constant coefficients}

Let $A_1, A_2, \ldots, A_m \in \mathcal{M}_{n}(K)$
be constant matrices such that
$A_{\alpha}A_{\beta}=A_{\beta}A_{\alpha}$,
$\forall \alpha, \beta \in \{1,2, \ldots, m\}$.

Let $t_0\in \mathbb{Z}^m$ and $x_0 \in K^n$.
According the Proposition \ref{alfa.p8},
the function
$x \colon \big\{ t \in \mathbb{Z}^m \, \big|\,
t \geq t_0 \big\} \to K^n$, given by the
formula {\rm (\ref{solautconst})}, is
the solution of the recurrence
\begin{equation}
\label{ecalfaexm2.1}
x(t+1_\alpha) = A_{\alpha} x(t),
\quad
\forall \alpha \in \{1,2, \ldots, m\},
\end{equation}
which verifies $x(t_0)=x_0$.
If $\forall \alpha$, $A_{\alpha}$
is invertible, then according the results in Subsection \ref{inv},
the recurrence {\rm (\ref{ecalfaexm2.1})}
has a unique solution
$x \colon  \mathbb{Z}^m \to K^n$
which verifies $x(t_0)=x_0$.
This is defined by the same formula {\rm (\ref{solautconst})},
but for any $t \in \mathbb{Z}^m$.

We shall use the following result.
\begin{theorem}
\label{diagsimultan}
Let $K$ be a field
and let $\mathcal{F}\neq \emptyset$,
$\mathcal{F}\subseteq \mathcal{M}_n(K)$, such that
any two matrices from $\mathcal{F}$
commute. If any matrix in $\mathcal{F}$
is diagonalizable (over $K$),
then there exists an invertible matrix
$T \in \mathcal{M}_{n}(K)$, such that
$\forall A \in \mathcal{F}$,
$\exists D(A) \in \mathcal{M}_{n}(K)$,
$D(A)$ diagonal matrix, for which
$A=T  D(A)   T^{-1}$.
\end{theorem}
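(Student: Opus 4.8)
The plan is to translate the statement into the language of linear operators and proceed by induction on $n$. Identifying $\mathcal{M}_n(K)$ with the endomorphism algebra of $V=K^n$, the conclusion is equivalent to the existence of a \emph{single} basis $\mathcal{B}$ of $V$ which is an eigenbasis simultaneously for every $A\in\mathcal{F}$; indeed, if $T$ is the matrix whose columns are the vectors of $\mathcal{B}$, then $T^{-1}AT$ is diagonal for each $A$, which is exactly $A=T\,D(A)\,T^{-1}$. Thus it suffices to prove: every commuting family $\mathcal{F}$ of diagonalizable endomorphisms of a finite-dimensional $K$-vector space $V$ admits a common eigenbasis. The family $\mathcal{F}$ may be infinite, but the induction is on $\dim V$, which is finite, so this causes no difficulty.

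The trivial case and the splitting step go as follows. If every $A\in\mathcal{F}$ is a scalar multiple of the identity, any basis of $V$ works, and in particular $T=I_n$. Otherwise fix some $A\in\mathcal{F}$ that is not scalar. Since $A$ is diagonalizable over $K$, we have $V=\bigoplus_{i=1}^{k}V_i$, where $V_1,\ldots,V_k$ are the eigenspaces of $A$ attached to its distinct eigenvalues $\lambda_1,\ldots,\lambda_k\in K$; because $A$ is not scalar, $k\geq 2$, hence $\dim V_i<n$ for every $i$.

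Next come invariance and stability of diagonalizability. For every $B\in\mathcal{F}$ and every $i$, the subspace $V_i$ is $B$-invariant: if $Av=\lambda_i v$ then $A(Bv)=B(Av)=\lambda_i(Bv)$ by commutativity. The key point is that $B|_{V_i}$ is again diagonalizable. I would argue this through the minimal polynomial: a matrix over $K$ is diagonalizable over $K$ if and only if its minimal polynomial is a product of distinct linear factors in $K[X]$; since for every $p\in K[X]$ the relation $p(B)=0$ forces $p(B|_{V_i})=0$, the minimal polynomial of $B|_{V_i}$ divides that of $B$, and a divisor of a product of distinct linear factors is again of that form. (Equivalently, one may observe that each spectral projection of $B$ is a polynomial in $B$ via Lagrange interpolation, hence preserves the $B$-invariant subspace $V_i$, which yields $V_i=\bigoplus (V_i\cap E)$ over the eigenspaces $E$ of $B$.) Consequently $\{B|_{V_i}:B\in\mathcal{F}\}$ is a commuting family of diagonalizable endomorphisms of $V_i$.

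Finally, induction and conclusion. Applying the induction hypothesis on each $V_i$, whose dimension is $<n$, produces a basis $\mathcal{B}_i$ of $V_i$ that is a common eigenbasis for $\{B|_{V_i}:B\in\mathcal{F}\}$. Since $V=\bigoplus_{i=1}^{k}V_i$, the concatenation $\mathcal{B}=\mathcal{B}_1\cup\cdots\cup\mathcal{B}_k$ is a basis of $V$, and each of its vectors, lying in some $V_i$, is an eigenvector of $A|_{V_i}$ and hence of $A$, for every $A\in\mathcal{F}$. Letting $T$ be the matrix with columns the vectors of $\mathcal{B}$ gives the claimed simultaneous diagonalization. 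The main obstacle is precisely the stability step — that the restriction of a diagonalizable operator to an invariant subspace remains diagonalizable \emph{over the same field $K$} — because over a non-algebraically-closed field one cannot simply invoke the existence of eigenvalues, and must instead pass through the minimal polynomial (or the interpolation/projection argument).
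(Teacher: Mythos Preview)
Your argument is correct and is the standard proof of simultaneous diagonalization: induct on $\dim V$, peel off the eigenspace decomposition of a non-scalar member of $\mathcal{F}$, observe that commutativity makes each eigenspace $\mathcal{F}$-invariant, use the minimal-polynomial criterion to see that restrictions remain diagonalizable over $K$, and recurse. The handling of the stability step is exactly right, and your remark that this is the only delicate point over a general field is well placed.

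There is, however, nothing to compare against: the paper does not prove Theorem~\ref{diagsimultan}. It is introduced with ``We shall use the following result'' and stated without proof, as a classical fact to be invoked in the proof of Theorem~\ref{alfa.tt8}. Your write-up thus supplies what the paper omits, and does so by the textbook route.
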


We shall denote by $diag (d_1 ; d_2; \ldots ; d_n) \in \mathcal{M}_{n}(K)$,
the diagonal matrix, which has on the principal diagonal the elements
$d_1,d_2,\ldots, d_n$, in this order.

If the matrix $A \in \mathcal{M}_{n}(K)$
has the columns $q_1, q_2,\ldots, q_n$, we shall denote
$$ A = col(q_1; q_2;\ldots ; q_n). $$

\begin{theorem}
\label{alfa.tt8}
Let $A_1, A_2, \ldots, A_m \in \mathcal{M}_{n}(K)$
be diagonalizable matrices (over $K$), such that
$A_{\alpha}A_{\beta}=A_{\beta}A_{\alpha}$,
$\forall \alpha, \beta \in \{1,2, \ldots, m\}$.

Let $T = col(v_1 ; v_2 ; \ldots ; v_n) \in \mathcal{M}_{n}(K)$
be an invertible matrix such that
\begin{equation*}
A_{\alpha}=T
\cdot
diag (\lambda_{1,\alpha}; \lambda_{2,\alpha};\ldots ; \lambda_{n,\alpha})
\cdot
T^{-1},
\quad
\forall \alpha \in \{1,2, \ldots, m\},
\end{equation*}
where $\lambda_{1,\alpha}, \lambda_{2,\alpha}, \ldots , \lambda_{n,\alpha} \in K$
(such $T$ exists, according Theorem {\rm \ref{diagsimultan}}).

Then, $\forall (t_0,x_0) \in \mathbb{Z}^m \times K^n$,
the solution of the recurrence {\rm (\ref{ecalfaexm2.1})},
which verifies $x(t_0)=x_0$, is
\begin{equation*}
x \colon \big\{ t \in \mathbb{Z}^m \, \big|\,
t \geq t_0 \big\} \to K^n,
\end{equation*}
\begin{equation}
\label{eq.diagsimultan1}
x(t)=\sum_{j=1}^n c_j
\Big (
\prod_{\alpha =1}^m \lambda_{j,\alpha}^{t^{\alpha}-t_0^{\alpha}}
\Big )
v_j,
\quad
\forall t \geq t_0,
\end{equation}
where $(c_1, c_2, \ldots , c_n)^{\top}=T^{-1}x_0$.

If $\forall \alpha$, the matrix $A_{\alpha}$
is invertible, then
the recurrence {\rm (\ref{ecalfaexm2.1})}
has a unique solution
$x \colon  \mathbb{Z}^m \to K^n$
which verifies $x(t_0)=x_0$.
This solution is defined also by the formula
{\rm (\ref{eq.diagsimultan1})},
but for any $t \in \mathbb{Z}^m$.
\end{theorem}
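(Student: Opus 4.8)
The plan is to reduce everything to the explicit formula for the fundamental matrix already obtained in Propositions~\ref{alfa.p7}~$j)$ and~\ref{alfa.p8}, and then to simultaneously diagonalize. First I would invoke Theorem~\ref{diagsimultan} to fix an invertible matrix $T = col(v_1 ; v_2 ; \ldots ; v_n)$ with $A_{\alpha} = T D_{\alpha} T^{-1}$, where $D_{\alpha} = diag(\lambda_{1,\alpha}; \lambda_{2,\alpha}; \ldots ; \lambda_{n,\alpha})$. Note that the commutativity hypothesis $A_{\alpha}A_{\beta} = A_{\beta}A_{\alpha}$ is precisely the compatibility condition~(\ref{ecalfat5.1cst}) from Remark~\ref{compatconst}, so all the results of the preceding sections apply to the recurrence (\ref{ecalfaexm2.1}).

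Next, by Proposition~\ref{alfa.p8} the unique solution of (\ref{ecalfaexm2.1}) with $x(t_0) = x_0$ is $x(t) = \chi(t,t_0)x_0$, and by Proposition~\ref{alfa.p7}~$j)$ one has $\chi(t,t_0) = A_1^{t^1-t_0^1} A_2^{t^2-t_0^2} \cdots A_m^{t^m-t_0^m}$ for $t \geq t_0$. Substituting $A_{\alpha} = T D_{\alpha} T^{-1}$ and using $A_{\alpha}^k = T D_{\alpha}^k T^{-1}$, all the interior factors $T^{-1}T$ telescope, which gives
\begin{equation*}
\chi(t,t_0) = T \, D_1^{t^1-t_0^1} D_2^{t^2-t_0^2} \cdots D_m^{t^m-t_0^m} \, T^{-1}.
\end{equation*}
Since all the $D_{\alpha}$ are diagonal, their product is the diagonal matrix whose $j$-th diagonal entry is $\prod_{\alpha=1}^m \lambda_{j,\alpha}^{t^{\alpha}-t_0^{\alpha}}$.

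Finally I would set $(c_1, c_2, \ldots, c_n)^{\top} = T^{-1}x_0$; then $D_1^{t^1-t_0^1}\cdots D_m^{t^m-t_0^m}\, T^{-1}x_0$ has $j$-th entry $c_j \prod_{\alpha=1}^m \lambda_{j,\alpha}^{t^{\alpha}-t_0^{\alpha}}$, and left-multiplying by $T = col(v_1;\ldots;v_n)$ produces exactly $\sum_{j=1}^n c_j \big(\prod_{\alpha=1}^m \lambda_{j,\alpha}^{t^{\alpha}-t_0^{\alpha}}\big) v_j$, which is (\ref{eq.diagsimultan1}). For the last assertion, if each $A_{\alpha}$ is invertible then every eigenvalue $\lambda_{j,\alpha}$ is nonzero (since $\det A_{\alpha} = \prod_j \lambda_{j,\alpha}$), so the right-hand side of (\ref{eq.diagsimultan1}) makes sense for all $t \in \mathbb{Z}^m$; by Theorem~\ref{alfa.t6} the recurrence then has a unique solution on all of $\mathbb{Z}^m$, and the extended fundamental-matrix formula from Subsection~\ref{inv} together with the same telescoping computation shows that this solution is again given by (\ref{eq.diagsimultan1}).

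I do not expect a genuine obstacle: the statement is essentially a bookkeeping consequence of the machinery already in place. The only points needing a little care are (i) the telescoping identity $A_1^{k_1}\cdots A_m^{k_m} = T D_1^{k_1}\cdots D_m^{k_m} T^{-1}$, which is immediate from $A_{\alpha}^k = T D_{\alpha}^k T^{-1}$, and (ii) the fact that mutually diagonal matrices multiply entrywise along the diagonal, so no ordering issue among the $\lambda_{j,\alpha}$ arises when forming $\prod_{\alpha} \lambda_{j,\alpha}^{t^{\alpha}-t_0^{\alpha}}$. It is also worth recording explicitly why invertibility of the $A_{\alpha}$ forces $\lambda_{j,\alpha}\neq 0$, as this is exactly what renders the negative exponents in (\ref{eq.diagsimultan1}) meaningful in the global case.
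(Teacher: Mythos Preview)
Your proposal is correct and follows essentially the same route as the paper's own proof: both start from the formula $x(t)=\prod_{\alpha} A_{\alpha}^{t^{\alpha}-t_0^{\alpha}}\,x_0$ (equivalently $\chi(t,t_0)x_0$), substitute $A_{\alpha}=T D_{\alpha} T^{-1}$, telescope the $T^{-1}T$ factors, multiply the diagonals entrywise, and then read off the sum $\sum_j c_j\big(\prod_{\alpha}\lambda_{j,\alpha}^{t^{\alpha}-t_0^{\alpha}}\big)v_j$. Your treatment of the invertible case (nonzero eigenvalues permit negative exponents, then appeal to the global results of Subsection~\ref{inv}) also matches the paper's argument.
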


\begin{proof}
According the formula {\rm (\ref{solautconst})},
for any $t \geq t_0$, we have
\begin{equation*}
x(t)=\Big (
\prod_{\alpha =1}^m A_{\alpha}^{t^{\alpha}-t_0^{\alpha}}
\Big )x_0
\end{equation*}
\begin{equation*}
=\Big (
\prod_{\alpha =1}^m
\big(
T
\cdot
diag (\lambda_{1,\alpha}; \lambda_{2,\alpha};\ldots ; \lambda_{n,\alpha})
\cdot
T^{-1}
\big)^{t^{\alpha}-t_0^{\alpha}}
\Big )x_0
\end{equation*}
\begin{equation*}
=\Big (
\prod_{\alpha =1}^m
T
\cdot
\big(
diag (\lambda_{1,\alpha}; \lambda_{2,\alpha};\ldots ; \lambda_{n,\alpha})
\big)^{t^{\alpha}-t_0^{\alpha}}
\cdot
T^{-1}
\Big )x_0
\end{equation*}
\begin{equation*}
=T
\cdot
\Big (
\prod_{\alpha =1}^m
diag \big(
\lambda_{1,\alpha}^{t^{\alpha}-t_0^{\alpha}};
\lambda_{2,\alpha}^{t^{\alpha}-t_0^{\alpha}};
\ldots ; \lambda_{n,\alpha}^{t^{\alpha}-t_0^{\alpha}}
\big)
\Big )\cdot
T^{-1}x_0
\end{equation*}
\begin{equation*}
=
col(v_1 ; v_2 ; \ldots ; v_n)
\cdot
diag \Big(
\prod_{\alpha =1}^m \lambda_{1,\alpha}^{t^{\alpha}-t_0^{\alpha}};
\prod_{\alpha =1}^m \lambda_{2,\alpha}^{t^{\alpha}-t_0^{\alpha}};
\ldots ;
\prod_{\alpha =1}^m \lambda_{n,\alpha}^{t^{\alpha}-t_0^{\alpha}}
\Big)
\cdot
T^{-1}x_0
\end{equation*}
\begin{equation*}
=
col \Bigg(
\Big(\prod_{\alpha =1}^m \lambda_{1,\alpha}^{t^{\alpha}-t_0^{\alpha}}\Big) v_1;
\Big(\prod_{\alpha =1}^m \lambda_{2,\alpha}^{t^{\alpha}-t_0^{\alpha}}\Big) v_2;
\ldots ;
\Big(\prod_{\alpha =1}^m \lambda_{n,\alpha}^{t^{\alpha}-t_0^{\alpha}}\Big)v_n
\Bigg)
\cdot
(c_1, c_2, \ldots , c_n)^{\top}
\end{equation*}
\begin{equation*}
=
\sum_{j=1}^n
c_j
\Big(
\prod_{\alpha =1}^m
\lambda_{j,\alpha}^{t^{\alpha}-t_0^{\alpha}}
\Big) v_j.
\end{equation*}

If all the matrices $A_{\alpha}$ are invertible, then we saw that the formula
{\rm (\ref{solautconst})} is true for any $t \in \mathbb{Z}^m$.
But $A_{\alpha}$ is invertible
iff $\lambda_{j,\alpha}\neq 0$,
$\forall j=\overline{1,n}$.
We notice easily that in this case all equalities
above are true for any $t \in \mathbb{Z}^m$.
\end{proof}

\begin{remark}
\label{obs.alfa.tt8}
If $T = col(v_1 ; v_2 ; \ldots ; v_n) \in \mathcal{M}_{n}(K)$
is the invertible matrix which appears in Theorem {\rm \ref{alfa.tt8}},
then $\big \{ v_1, v_2, \ldots , v_n \big \}$ is a basis
of $K^n$, and each $v_j$
is an eigenvector for all the matrices $A_{\alpha}$.
\end{remark}

\begin{remark}
\label{oobs.alfa.tt8}
In the conditions of Theorem {\rm \ref{alfa.tt8}},
the fundamental matrix is
\begin{equation*}
\chi(t,t_0)
=
T
\cdot
\Big (
\prod_{\alpha =1}^m
diag \big(
\lambda_{1,\alpha}^{t^{\alpha}-t_0^{\alpha}};
\lambda_{2,\alpha}^{t^{\alpha}-t_0^{\alpha}};
\ldots ; \lambda_{n,\alpha}^{t^{\alpha}-t_0^{\alpha}}
\big)
\Big )\cdot
T^{-1},
\quad
\forall t \geq t_0.
\end{equation*}

If all the matrices $A_{\alpha}$ are invertible, then the foregoing formula is true for any
$(t, t_0) \in \mathbb{Z}^m \times \mathbb{Z}^m$.
\end{remark}

For $A \in \mathcal{M}_{n}(K)$ and
$k \in \mathbb{N}$, we denote
\begin{equation*}
S(k;A)
=
\left \{
\begin{array}{ccc}
\!\! I_n+A+\ldots +A^{k-1} \vspace{0.1 cm} & \hbox{if} & k \geq 1\\
\!\! O_n & \hbox{if} & k=0.
\end{array}
\right.
\end{equation*}

\begin{theorem}
\label{alfa.tt9}
For $\alpha \in \{ 1,2, \ldots, m \}$,
we consider the matrices
$A_\alpha \in \mathcal{M}_{n}(K)$,
$b_{\alpha} \in K^n=\mathcal{M}_{n,1}(K)$
such that
\begin{equation}
\label{alfatt9.1}
A_\alpha A_\beta
=
A_\beta A_\alpha,
\quad
\forall \alpha, \beta \in \{ 1,2, \ldots, m \}
\end{equation}
\begin{equation}
\label{alfatt9.2}
(I_n-A_\alpha)b_\beta
=
(I_n-A_\beta)b_\alpha,
\quad
\forall \alpha, \beta \in \{ 1,2, \ldots, m \}.
\end{equation}

Let $(t_0,x_0) \in \mathbb{Z}^m \times K^n$.
The solution of the recurrence
\begin{equation}
\label{alfatt9.3}
x(t+1_\alpha) = A_{\alpha} x(t)+b_{\alpha},
\quad
\forall \alpha \in \{1,2, \ldots, m\},
\end{equation}
which verifies $x(t_0)=x_0$,
is the function
$
x \colon \big\{ t \in \mathbb{Z}^m \, \big|\,
t \geq t_0 \big\} \to K^n
$, defined for any $t \geq t_0$ by
\begin{equation*}
x(t)=\Big (
\prod_{\alpha =1}^m A_{\alpha}^{t^{\alpha}-t_0^{\alpha}}
\Big )x_0
+
S(t^1-t_0^1;A_1)b_1
+
\sum_{\beta=2}^m
\Big (
\prod_{\alpha =1}^{\beta -1} A_{\alpha}^{t^{\alpha}-t_0^{\alpha}}
\Big )
S(t^{\beta}-t_0^{\beta} ; A_{\beta})b_{\beta},
\end{equation*}
if $m \geq 2$, respectively
\begin{equation*}
x(t)=A_1^{t^1-t_0^1}x_0
+
S(t^1-t_0^1;A_1)b_1,
\end{equation*}
if $m =1$.
\end{theorem}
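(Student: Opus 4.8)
The plan is to verify the claimed closed form directly: first check that it satisfies the initial condition, then check that it satisfies each of the $m$ recurrence relations in (\ref{alfatt9.3}), and finally invoke the uniqueness statement already proved (Theorem \ref{alfa.t5}$(b)$, whose hypotheses (\ref{ecalfat5.1cst})--(\ref{ecalfat5.2cst}) are exactly (\ref{alfatt9.1})--(\ref{alfatt9.2})) to conclude this is \emph{the} solution. I would treat $m\ge 2$; the case $m=1$ is the terminal specialization of the same computation (with the sum over $\beta$ empty). Plugging $t=t_0$ into the formula, every exponent $t^\alpha-t_0^\alpha$ and every argument $t^\beta-t_0^\beta$ of $S(\,\cdot\,;A_\beta)$ vanishes, so $\prod A_\alpha^0 = I_n$, $S(0;A_\beta)=O_n$, and we are left with $x(t_0)=x_0$, as required.

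The core of the argument is the recurrence check. Fix an index $\gamma\in\{1,\dots,m\}$ and compute $x(t+1_\gamma)$ from the formula, i.e.\ replace $t^\gamma$ by $t^\gamma+1$ throughout and leave the other coordinates fixed. I would split into the term $\big(\prod_\alpha A_\alpha^{t^\alpha-t_0^\alpha}\big)x_0$, the $\beta=1$ term, and the generic $\beta\ge 2$ terms, and in each piece isolate the factor depending on the $\gamma$-th coordinate. The key elementary identities are: $A_\gamma^{(k+1)} = A_\gamma\cdot A_\gamma^{k}$ for the homogeneous part, and, for the forcing terms, the pair of relations
\begin{equation*}
S(k+1;A_\gamma) = I_n + A_\gamma\, S(k;A_\gamma) = A_\gamma\, S(k;A_\gamma)+I_n,
\end{equation*}
together with the fact (from $A_\alpha A_\beta=A_\beta A_\alpha$) that all the matrices $A_\alpha$, and hence all polynomials in them such as $S(k;A_\beta)$, commute with one another. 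Using these, each summand of $x(t+1_\gamma)$ naturally reorganizes as $A_\gamma$ times the corresponding summand of $x(t)$, plus a leftover. Collecting the leftovers, the $\beta=\gamma$ term contributes a ``$+I_n$'' (coming from $S(t^\gamma-t_0^\gamma+1;A_\gamma)=A_\gamma S(t^\gamma-t_0^\gamma;A_\gamma)+I_n$), which after multiplication by the prefix product $\prod_{\alpha<\gamma}A_\alpha^{t^\alpha-t_0^\alpha}$ yields a residual block, and one must show this residual equals exactly $b_\gamma$ so that $x(t+1_\gamma)=A_\gamma x(t)+b_\gamma$.

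The main obstacle is precisely the bookkeeping in that last step: the asymmetric way the formula singles out the ordering $1,2,\dots,m$ (the prefix product $\prod_{\alpha=1}^{\beta-1}$ in front of the $\beta$-th forcing term) means the ``extra'' pieces produced when $\gamma\neq 1$ do not obviously telescope to $b_\gamma$ on the nose — one has to use the compatibility conditions (\ref{alfatt9.2}), in the form $(I_n-A_\alpha)b_\beta=(I_n-A_\beta)b_\alpha$, to migrate factors of $A_\alpha$ ($\alpha<\gamma$) across and absorb them. Concretely, I expect to show by a short induction on $\gamma$ (or by a direct manipulation using $S(k;A_\alpha)(I_n-A_\alpha)=I_n-A_\alpha^{k}$) that
\begin{equation*}
\sum_{\alpha=1}^{\gamma-1}\Big(\prod_{\rho=1}^{\alpha-1}A_\rho^{t^\rho-t_0^\rho}\Big)S(t^\alpha-t_0^\alpha;A_\alpha)\,(I_n-A_\alpha)\,b_\gamma \;+\; \Big(\prod_{\alpha=1}^{\gamma-1}A_\alpha^{t^\alpha-t_0^\alpha}\Big)b_\gamma \;=\; b_\gamma,
\end{equation*}
which is exactly the identity needed to finish. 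Once the recurrence is verified for every $\gamma$, the initial condition holds, and Theorem \ref{alfa.t5}$(b)$ gives uniqueness, the proof is complete; the $m=1$ statement drops out by taking $m=1$ (empty $\beta$-sum) in the computation above.
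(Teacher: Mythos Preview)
Your proof is correct but takes a different route from the paper's. The paper argues by induction on $m$: it freezes the first coordinate at $t_0^1$, so that $\tilde{x}(\tilde{t}):=x(t_0^1,\tilde{t})$ satisfies the analogous $(m-1)$-variable recurrence (with matrices $A_2,\dots,A_m$ and vectors $b_2,\dots,b_m$), applies the induction hypothesis to obtain a closed form for $\tilde{x}$, and then uses the one-variable recurrence in the first coordinate to write $x(t)=A_1^{t^1-t_0^1}\tilde{x}(\tilde{t})+S(t^1-t_0^1;A_1)b_1$; substituting gives the stated formula. This is a \emph{derivation} of the formula rather than a verification, so the compatibility relation (\ref{alfatt9.2}) never appears in the computation --- it is invoked only at the outset, via Theorem~\ref{alfa.t5} and Remark~\ref{compatconst}, to guarantee that a unique solution exists.

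Your approach instead checks the given formula against each of the $m$ recurrences and then appeals to uniqueness. This avoids induction but forces you to use (\ref{alfatt9.2}) explicitly in the telescoping step: the ``leftover'' from the terms with $\beta<\gamma$ is $(I_n-A_\gamma)\big(\prod_{\rho<\beta}A_\rho^{k_\rho}\big)S(k_\beta;A_\beta)b_\beta$, and only after commuting and applying $(I_n-A_\gamma)b_\beta=(I_n-A_\beta)b_\gamma$ does it become the summand in your displayed identity. That identity is then immediate from $S(k;A_\alpha)(I_n-A_\alpha)=I_n-A_\alpha^{k}$ and a telescoping sum, exactly as you indicate. Either route works; the paper's induction hides the role of (\ref{alfatt9.2}) inside the existence/uniqueness black box, whereas yours makes it visible in the algebra.
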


\begin{proof}
According the Theorem~\ref{alfa.t5} and the Remark~\ref{compatconst}
it follows that the recurrence {\rm (\ref{alfatt9.3})}
has a unique solution which verifies $x(t_0)=x_0$.

We prove the statement by induction on $m$, the number of components of $t$.

For $m=1$, one verifies immediately, by direct computations,
that for any $t \geq t_0$, the function $x(t)$ verifies the recurrence {\rm (\ref{alfatt9.3})}
and the condition $x(t_0)=x_0$.

Let $m\geq 2$.
Suppose the statement is true for
$m-1$ and we shall prove it for $m$.
We denote $\tilde{t}=(t^2, \ldots, t^m)$;
$\tilde{t}_0=(t_0^2, \ldots, t_0^m)$.

Let $\tilde{x}(\tilde{t})
=x(t_0^1,\tilde{t})
=x(t_0^1,t^2, \ldots, t^m)$.
If $t^1>t_0^1$, then
\begin{equation*}
x(t)=x(t^1,\tilde{t})=
A_1 x(t^1-1,\tilde{t})+b_1
=A_1^2 x(t^1-2,\tilde{t})+A_1b_1+b_1=
\end{equation*}
\begin{equation*}
=
\ldots
=A_1^k x(t^1-k,\tilde{t})+A_1^{k-1}b_1+\ldots+A_1b_1+b_1
\end{equation*}
\begin{equation*}
=
\ldots
=
A_1^{t^1-t_0^1} x(t_0^1,\tilde{t})
+A_1^{t^1-t_0^1-1}b_1+\ldots+A_1b_1+b_1
\end{equation*}
\begin{equation*}
=
A_1^{t^1-t_0^1} \tilde{x}(\tilde{t})
+
S(t^1-t_0^1;A_1)b_1.
\end{equation*}
We have proved that if $t^1>t_0^1$, then
$x(t)=A_1^{t^1-t_0^1} \tilde{x}(\tilde{t})
+
S(t^1-t_0^1;A_1)b_1$;
relation which is verified immediately for $t^1=t_0^1$.

For $\alpha \in \{ 2, \ldots, m \}$, we denote
$\tilde{1}_{\alpha}=(0,\ldots,0,1,0,\ldots, 0)\in \mathbb{Z}^{m-1}$;
hence $1_{\alpha}=(0,\tilde{1}_{\alpha})$.
For $\alpha \geq 2$ and $t^1=t_0^1$, the relations
{\rm (\ref{alfatt9.3})} become:

$x((t_0^1,\tilde{t})+(0,\tilde{1}_{\alpha}))
= A_\alpha  x(t_0^1,\tilde{t}) +b_\alpha$,\, i.e.,
\begin{equation*}
\tilde{x}(\tilde{t}+\tilde{1}_{\alpha})
= A_\alpha \tilde{x}(\tilde{t}) + b_\alpha,
\quad
\forall\, \tilde{t} \geq \tilde{t}_0,\,\,
\forall \alpha \in \{2, \ldots, m\}.
\end{equation*}
Obviously $\tilde{x}(\tilde{t}_0)
=x(t_0^1,\tilde{t}_0)=x(t_0)=x_0$.
Since $\tilde{t}$ has $m-1$ components,
from the induction hypothesis follows that
$\forall \tilde{t} \geq \tilde{t}_0$ we have
\begin{equation*}
\tilde{x}(\tilde{t})=
\Big (
\prod_{\alpha =2}^m A_{\alpha}^{t^{\alpha}-t_0^{\alpha}}
\Big )x_0
+
S(t^2-t_0^2;A_2)b_2
+
\sum_{\beta=3}^m
\Big (
\prod_{\alpha =2}^{\beta -1} A_{\alpha}^{t^{\alpha}-t_0^{\alpha}}
\Big )
S(t^{\beta}-t_0^{\beta} ; A_{\beta})b_{\beta},
\end{equation*}
if $m \geq 3$, respectively
\begin{equation*}
\tilde{x}(\tilde{t})
=
A_2^{t^2-t_0^2}x_0
+
S(t^2-t_0^2;A_2)b_2,
\end{equation*}
if $m =2$.

Hence, if $m \geq 3$, for any $t\geq t_0$, we have
\begin{equation*}
x(t)=A_1^{t^1-t_0^1} \tilde{x}(\tilde{t})
+
S(t^1-t_0^1;A_1)b_1
\end{equation*}
\begin{equation*}
=
A_1^{t^1-t_0^1}\Big (
\prod_{\alpha =2}^m A_{\alpha}^{t^{\alpha}-t_0^{\alpha}}
\Big )x_0
+
A_1^{t^1-t_0^1}S(t^2-t_0^2;A_2)b_2
+
\end{equation*}
\begin{equation*}
+
\sum_{\beta=3}^m
A_1^{t^1-t_0^1}
\Big (
\prod_{\alpha =2}^{\beta -1} A_{\alpha}^{t^{\alpha}-t_0^{\alpha}}
\Big )S(t^{\beta}-t_0^{\beta} ; A_{\beta})b_{\beta}
+
S(t^1-t_0^1;A_1)b_1
\end{equation*}
\begin{equation*}
=
\Big (
\prod_{\alpha =1}^m A_{\alpha}^{t^{\alpha}-t_0^{\alpha}}
\Big )x_0
+
S(t^1-t_0^1;A_1)b_1
+
\sum_{\beta=2}^m
\Big (
\prod_{\alpha =1}^{\beta -1} A_{\alpha}^{t^{\alpha}-t_0^{\alpha}}
\Big )S(t^{\beta}-t_0^{\beta} ; A_{\beta})b_{\beta}.
\end{equation*}

If $m =2$, for any $t\geq t_0$, we have
\begin{equation*}
x(t)=A_1^{t^1-t_0^1} \tilde{x}(\tilde{t})
+
S(t^1-t_0^1;A_1)b_1
\end{equation*}
\begin{equation*}
=
A_1^{t^1-t_0^1}A_2^{t^2-t_0^2}x_0
+
A_1^{t^1-t_0^1}S(t^2-t_0^2;A_2)b_2
+
S(t^1-t_0^1;A_1)b_1.
\end{equation*}
\end{proof}

\begin{theorem}
\label{alfa.tt10}
Consider the matrices
$A_\alpha \in \mathcal{M}_{n}(K)$,
$b_{\alpha} \in K^n=\mathcal{M}_{n,1}(K)$,
which for any $\alpha, \beta \in \{ 1,2, \ldots, m \}$
satisfy the conditions {\rm (\ref{alfatt9.1})} and {\rm (\ref{alfatt9.2})}.

$a)$
Suppose there exists an index
$\alpha_0 \in \{ 1,2, \ldots, m \}$, for which the matrix
$I_n-A_{\alpha_0}$ is invertible.
Let $v\in K^n$, such that
$(I_n-A_{\alpha_0})v=b_{\alpha_0}$, i.e.
$v=(I_n-A_{\alpha_0})^{-1}b_{\alpha_0}$.
Then
\begin{equation}
\label{alfatt10.1}
(I_n-A_\alpha)v=b_\alpha,
\quad
\forall \alpha \in \{ 1,2, \ldots, m \}.
\end{equation}

$b)$ Suppose there exists $v\in K^n$,
such that, for any
$\alpha \in \{ 1,2, \ldots, m \}$, the relations
{\rm (\ref{alfatt10.1})} are true.

Let $(t_0,x_0) \in \mathbb{Z}^m \times K^n$.
Then, the solution of the recurrence {\rm (\ref{alfatt9.3})},
which verifies $x(t_0)=x_0$,
is the function
$
x \colon \big\{ t \in \mathbb{Z}^m \, \big|\,
t \geq t_0 \big\} \to K^n
$, defined for any $t \geq t_0$ by
\begin{equation}
\label{alfatt10.2}
x(t)
=
\Big (
\prod_{\alpha =1}^m A_{\alpha}^{t^{\alpha}-t_0^{\alpha}}
\Big )
\cdot
(x_0-v)+v.
\end{equation}

If furthermore, $\forall \alpha$, the matrix $A_{\alpha}$
is invertible, then the
recurrence {\rm (\ref{alfatt9.3})}
has a unique solution
$x \colon  \mathbb{Z}^m \to K^n$,
which verifies $x(t_0)=x_0$.
This is defined also by the formula
{\rm (\ref{alfatt10.2})},
but for any $t \in \mathbb{Z}^m$.
\end{theorem}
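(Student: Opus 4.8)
The plan is to handle the two parts separately: part (a) is a short piece of matrix algebra exploiting commutativity and invertibility of $I_n-A_{\alpha_0}$, while part (b) reduces, by an affine change of the unknown, to the homogeneous constant-coefficient case already solved in Proposition \ref{alfa.p8}.

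For part (a), I would fix an arbitrary $\alpha\in\{1,2,\ldots,m\}$ and start from $(I_n-A_{\alpha_0})v=b_{\alpha_0}$ together with the compatibility relation (\ref{alfatt9.2}) written as $(I_n-A_\alpha)b_{\alpha_0}=(I_n-A_{\alpha_0})b_\alpha$. Substituting $b_{\alpha_0}=(I_n-A_{\alpha_0})v$ into the left-hand side gives $(I_n-A_\alpha)(I_n-A_{\alpha_0})v=(I_n-A_{\alpha_0})b_\alpha$. Since $A_\alpha$ and $A_{\alpha_0}$ commute by (\ref{alfatt9.1}), the polynomials $I_n-A_\alpha$ and $I_n-A_{\alpha_0}$ in these matrices commute as well, so the left-hand side equals $(I_n-A_{\alpha_0})\big((I_n-A_\alpha)v\big)$; cancelling the invertible factor $I_n-A_{\alpha_0}$ on the left yields $(I_n-A_\alpha)v=b_\alpha$, which is exactly (\ref{alfatt10.1}).

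For part (b), I would first record that existence and uniqueness of the solution with $x(t_0)=x_0$ follow from Theorem \ref{alfa.t5} and Remark \ref{compatconst}, because (\ref{alfatt9.1}) and (\ref{alfatt9.2}) coincide (up to an overall sign in the second) with (\ref{ecalfat5.1cst}) and (\ref{ecalfat5.2cst}). To identify this solution explicitly, perform the substitution $y(t)=x(t)-v$. Rewriting (\ref{alfatt10.1}) as $A_\alpha v+b_\alpha=v$, one computes $y(t+1_\alpha)=x(t+1_\alpha)-v=A_\alpha x(t)+b_\alpha-v=A_\alpha\big(y(t)+v\big)+b_\alpha-v=A_\alpha y(t)$ for every $\alpha$, so $y$ is a solution of the homogeneous constant-coefficient recurrence (\ref{rec.alfa.omog}) with $y(t_0)=x_0-v$. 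Proposition \ref{alfa.p8}, via formula (\ref{solautconst}), then gives $y(t)=\big(\prod_{\alpha=1}^m A_\alpha^{t^\alpha-t_0^\alpha}\big)(x_0-v)$ for all $t\geq t_0$, and adding $v$ back produces (\ref{alfatt10.2}).

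For the last assertion, when every $A_\alpha$ is invertible I would invoke the results of Subsection \ref{inv} (equivalently Theorem \ref{alfa.t6}): the recurrence (\ref{alfatt9.3}) then admits a unique solution on all of $\mathbb{Z}^m$, the change of unknown $y=x-v$ remains valid there, and the invertible-coefficient analog of Proposition \ref{alfa.p8} gives $y(t)=\big(\prod_{\alpha=1}^m A_\alpha^{t^\alpha-t_0^\alpha}\big)(x_0-v)$ for every $t\in\mathbb{Z}^m$, so that (\ref{alfatt10.2}) holds without the restriction $t\geq t_0$. I do not anticipate any real obstacle; the only point that deserves a moment's care is verifying that the shift by the ``equilibrium'' vector $v$ cancels the inhomogeneous term $b_\alpha$ uniformly in $\alpha$, which is precisely what (\ref{alfatt10.1}) supplies — furnished in case (a) by part (a), and assumed outright in case (b). As an alternative to the substitution argument, part (b) can also be obtained from the explicit formula of Theorem \ref{alfa.tt9} by collapsing each term $\big(\prod_{\alpha=1}^{\beta-1}A_\alpha^{t^\alpha-t_0^\alpha}\big)S(t^\beta-t_0^\beta;A_\beta)b_\beta$ using $b_\beta=(I_n-A_\beta)v$ and the telescoping identity $(I_n-A)S(k;A)=I_n-A^k$, but the $y=x-v$ reduction is shorter.
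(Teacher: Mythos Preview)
Your proposal is correct and follows essentially the same route as the paper: part (a) is handled by substituting $b_{\alpha_0}=(I_n-A_{\alpha_0})v$ into the compatibility relation, commuting the two factors, and cancelling $I_n-A_{\alpha_0}$; part (b) is handled by the affine shift $y=x-v$, reducing to the homogeneous case and invoking Proposition~\ref{alfa.p8} (and Subsection~\ref{inv} for the invertible case). Your additional remark about the telescoping alternative via Theorem~\ref{alfa.tt9} is a nice observation not in the paper, but the main argument is the same.
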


\begin{proof}
$a)$ From the relation $(I_n-A_\alpha)b_{\alpha_0}
=
(I_n-A_{\alpha_0})b_{\alpha}$
it follows the equality:

\noindent
$(I_n-A_\alpha)(I_n-A_{\alpha_0})v
=
(I_n-A_{\alpha_0})b_{\alpha}$
$\Longleftrightarrow$
$(I_n-A_{\alpha_0})(I_n-A_\alpha)v
=
(I_n-A_{\alpha_0})b_{\alpha}$.
Since $I_n-A_{\alpha_0}$ is invertible,
we obtain $(I_n-A_\alpha)v
=b_{\alpha}$.

$b)$ Let $x(\cdot)$ be the solution of the recurrence {\rm (\ref{alfatt9.3})},
which verifies $x(t_0)=x_0$.
We denote $y(\cdot)=x(\cdot)-v$, i.e. $x(\cdot)=y(\cdot)+v$.
We have
\begin{equation*}
y(t+1_\alpha)+v = A_{\alpha} \big( y(t)+v \big)+b_{\alpha},
\quad
\forall \alpha \in \{1,2, \ldots, m\},
\end{equation*}
\begin{equation*}
\Longleftrightarrow
y(t+1_\alpha) = A_{\alpha} y(t)-(I_n-A_\alpha)v+b_{\alpha},
\quad
\forall \alpha \in \{1,2, \ldots, m\}.
\end{equation*}
Since the relations
{\rm (\ref{alfatt10.1})} are true,
it follows that $y(\cdot)$ is
the solution of the recurrence {\rm (\ref{ecalfaexm2.1})}
which verifies $y(t_0)=x_0-v$.
According the Proposition \ref{alfa.p8},
$\forall t\geq t_0$ we have
\begin{equation}
\label{alfatt10.3}
y(t)
=
\Big (
\prod_{\alpha =1}^m A_{\alpha}^{t^{\alpha}-t_0^{\alpha}}
\Big )
\cdot
(x_0-v).
\end{equation}
From the equality $y(t)=x(t)-v$,
we obtain the relation {\rm (\ref{alfatt10.2})}.

If $\forall \alpha$, the matrix $A_{\alpha}$
is invertible, then according the remarks in the Subsection \ref{inv},
it follows that the relation {\rm (\ref{alfatt10.3})}
is true for any $t \in \mathbb{Z}^m$;
consequently, also the formula {\rm (\ref{alfatt10.2})}
is valid for any $t \in \mathbb{Z}^m$.
\end{proof}

\begin{remark}
The statement:
``there exists $v\in K^n$,
such that, for any
$\alpha$, the relations
{\rm (\ref{alfatt10.1})} are true"
is equivalent to the fact that the recurrence {\rm (\ref{alfatt9.3})} has
a constant solution $x(\cdot)=v$.
\end{remark}

\section{Recurrences on a monoid}

Let $M$ be a nonvoid set, let
$\big ( N, \cdot, E \big )$ be a monoid
and let
$\varphi  \colon N \times M \to M$
be an action of the monoid $N$ on the set $M$, i.e.
\begin{equation}
\label{act}
\varphi (AB, x)= \varphi \big( A, (B, x) \big),\,\,\,
\varphi (e, x)=x,
\quad
\forall A,B \in N,
\forall x \in M.
\end{equation}

For any $A\in N$, $x\in M$, we denote
$\varphi (A, x)=Ax$ (not to be confused with the operation of monoid $N$).
The relations {\rm (\ref{act})} become
\begin{equation*}
(AB) x=  A(B x) ,\,\,\,
ex=x,
\quad
\forall A,B \in N,
\forall x \in M.
\end{equation*}

We denote by ${\cal Z}$ one of the sets
$\mathbb{Z}^m$ or
$ \big\{ t \in \mathbb{Z}^m \, \big|\,
t \geq t_1 \big\}$  (with $t_1 \in \mathbb{Z}^m$).

For each $\alpha \in \{1,2, \ldots, m\}$,
we consider the functions
$A_\alpha \colon \mathcal{Z} \to N$,
which define the recurrence
\begin{equation}
\label{ecalfaexm1.1}
x(t+1_\alpha) = A_\alpha(t)x(t),
\quad
\forall \alpha \in \{1,2, \ldots, m\},
\end{equation}
with the unknown function
$x \colon \big\{ t \in \mathcal{Z} \, \big|\,
t \geq t_0 \big\} \to M$,
$t_0 \in \mathcal{Z}$.

\begin{remark}
\label{alfa.o9}
For
$\big ( N, \cdot, E \big )=\big ( \mathcal{M}_n(K), \cdot, I_n \big )$,
$M=K^n=\mathcal{M}_{n,1}(K)$ and the action
\begin{equation*}
\varphi  \colon \mathcal{M}_n(K) \times K^n \to K^n,
\quad
\varphi (A,x)=Ax,
\quad
\forall A \in \mathcal{M}_n(K) , \,\,
\forall x \in K^n,
\end{equation*}
the recurrence {\rm (\ref{ecalfaexm1.1})} becomes
the recurrence {\rm (\ref{rec.alfa.omog})}.
\end{remark}

With a similar proof with those in the
Theorem \ref{alfa.t5}, it follows
\begin{theorem}
\label{alfa.tt11}
$a)$ If, for any $(t_0,x_0) \in \mathcal{Z} \times M$,
there exists at least one function
$x \colon \big\{ t \in \mathcal{Z} \, \big|\,
t \geq t_0 \big\} \to M$, which,
for any $t\geq t_0$, verifies the recurrence
{\rm (\ref{ecalfaexm1.1})} and the condition $x(t_0)=x_0$, then
\begin{equation}
\label{alfatt11.1}
A_\alpha(t+1_\beta)A_\beta (t) x
=
A_\beta(t+1_\alpha)A_\alpha(t) x
\end{equation}
$$
\forall t \in  \mathcal{Z},\,\,
\forall x \in M,
\quad
\forall \alpha, \beta \in \{1,2, \ldots, m\}.
$$

$b)$ If the relations {\rm (\ref{alfatt11.1})},
are satisfied,
then, for any $(t_0,x_0) \in \mathcal{Z} \times M$,
there exists a unique function $x \colon \big\{ t \in \mathcal{Z} \, \big|\,
t \geq t_0 \big\} \to M$, which,
for any $t\geq t_0$ verifies the recurrence
{\rm (\ref{ecalfaexm1.1})} and the condition $x(t_0)=x_0$.
\end{theorem}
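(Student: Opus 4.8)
The plan is to reduce Theorem~\ref{alfa.tt11} to the already-proved Proposition~\ref{alfa.p1} and Theorem~\ref{alfa.t3}, exactly as Theorem~\ref{alfa.t5} was reduced to them in the linear case. The bridge is the family of functions $F_\alpha(t,x)=A_\alpha(t)x$ (the action of $A_\alpha(t)\in N$ on $x\in M$), which turns the monoid recurrence {\rm (\ref{ecalfaexm1.1})} into a recurrence of the form {\rm (\ref{rec.alfa})} with this $M$ and these $F_\alpha$. The entire argument then consists of invoking the two cited results and translating their conclusions back and forth between the ``$F$'' formulation and the ``$N$-action'' formulation by means of the axioms $(AB)x=A(Bx)$ and $ex=x$.

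For part $a)$ I would first note that the hypothesis --- existence of at least one solution of {\rm (\ref{ecalfaexm1.1})} with $x(t_0)=x_0$ for every $(t_0,x_0)$ --- is precisely the hypothesis of Proposition~\ref{alfa.p1} for the functions $F_\alpha(t,x)=A_\alpha(t)x$. When $\mathcal{Z}=\big\{t\geq t_1\big\}$ this is immediate; when $\mathcal{Z}=\mathbb{Z}^m$ one fixes an arbitrary $t_1\in\mathbb{Z}^m$, restricts the $A_\alpha$ to $\big\{t\geq t_1\big\}$, applies Proposition~\ref{alfa.p1}, and then lets $t_1$ range over all of $\mathbb{Z}^m$ to obtain the conclusion for every $t\in\mathcal{Z}$ --- the same bookkeeping step used in the proof of Theorem~\ref{alfa.t5}. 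Proposition~\ref{alfa.p1} yields $F_\alpha(t+1_\beta,F_\beta(t,x))=F_\beta(t+1_\alpha,F_\alpha(t,x))$, and applying $(AB)x=A(Bx)$ to both sides gives $\big(A_\alpha(t+1_\beta)A_\beta(t)\big)x=\big(A_\beta(t+1_\alpha)A_\alpha(t)\big)x$, which is exactly {\rm (\ref{alfatt11.1})}. I would also remark, for contrast with Theorem~\ref{alfa.t5}, that here one \emph{cannot} and \emph{need not} sharpen this to an identity in $N$ by specializing $x$: the monoid/set setting provides no analogue of the vectors $e_j$, so the equality of the two actions on every $x\in M$ is the appropriate final form.

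For part $b)$ I would run the same translation in reverse. Setting again $F_\alpha(t,x)=A_\alpha(t)x$ on $\big\{t\in\mathcal{Z}\mid t\geq t_0\big\}\times M$, the hypothesis {\rm (\ref{alfatt11.1})} (which holds for all $t\in\mathcal{Z}$, hence in particular for $t\geq t_0$) gives, via $(AB)x=A(Bx)$, the identity $F_\alpha(t+1_\beta,F_\beta(t,x))=\big(A_\alpha(t+1_\beta)A_\beta(t)\big)x=\big(A_\beta(t+1_\alpha)A_\alpha(t)\big)x=F_\beta(t+1_\alpha,F_\alpha(t,x))$ for all $t\geq t_0$ and $x\in M$; that is, the compatibility condition {\rm (\ref{alfat3.1})} of Theorem~\ref{alfa.t3} holds for these $F_\alpha$. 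Theorem~\ref{alfa.t3} then produces, for each $x_0\in M$, a unique function $x\colon\big\{t\geq t_0\big\}\to M$ with $x(t+1_\alpha)=F_\alpha(t,x(t))=A_\alpha(t)x(t)$ and $x(t_0)=x_0$, which is the assertion. There is essentially no substantive obstacle: the only place demanding a little care is the half-space/union argument in part $a)$ for the case $\mathcal{Z}=\mathbb{Z}^m$, together with the routine but necessary use of the action axioms to pass between $A_\alpha(t+1_\beta)\big(A_\beta(t)x\big)$ and $\big(A_\alpha(t+1_\beta)A_\beta(t)\big)x$ in both directions.
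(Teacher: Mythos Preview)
Your proposal is correct and follows exactly the approach the paper indicates: the paper simply says the result follows ``with a similar proof with those in the Theorem~\ref{alfa.t5},'' and your reduction via $F_\alpha(t,x)=A_\alpha(t)x$ to Proposition~\ref{alfa.p1} and Theorem~\ref{alfa.t3}, together with the $\mathcal{Z}=\mathbb{Z}^m$ bookkeeping and the action axiom $(AB)x=A(Bx)$, is precisely that similar proof spelled out. Your observation that in the monoid setting one cannot (and need not) strip off the $x$ to obtain an identity in $N$ is the one genuine difference from Theorem~\ref{alfa.t5}, and you handle it correctly.
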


Now we consider the action of the monoid $N$ on himself, $\xi  \colon N \times N \to N$,
defined by
\begin{equation*}
\xi(A, X)= A \cdot X,
\quad
\forall A,X \in N
\quad
(``\cdot"\,
\mbox{is the operation considered on}\, N).
\end{equation*}
In this case, being given the functions
$A_\alpha \colon \mathcal{Z} \to N$,
$\alpha \in \{1,2, \ldots, m\}$,
the analogue of the recurrence {\rm (\ref{ecalfaexm1.1})} is
\begin{equation}
\label{ecmonoid}
X(t+1_\alpha) = A_\alpha(t)X(t),
\quad
\forall \alpha \in \{1,2, \ldots, m\},
\end{equation}
with the unknown function
$X \colon \big\{ t \in \mathcal{Z} \, \big|\,
t \geq t_0 \big\} \to N$,
$t_0 \in \mathcal{Z}$.

By doing like in the proof of the
Theorem \ref{alfa.t5}, it is shown that
\begin{theorem}
\label{alfa.tt12}
$a)$ If, for any $t_0 \in \mathcal{Z}$,
there exists at least one function
$X \colon \big\{ t \in \mathcal{Z} \, \big|\,
t \geq t_0 \big\} \to N$, which,
for any $t\geq t_0$, verifies the recurrence
{\rm (\ref{ecmonoid})} and the condition $X(t_0)=E$, then
\begin{equation}
\label{alfatt12.1}
A_\alpha(t+1_\beta)A_\beta (t)
=
A_\beta(t+1_\alpha)A_\alpha(t)
\end{equation}
$$
\forall t \in  \mathcal{Z},
\quad
\forall \alpha, \beta \in \{1,2, \ldots, m\}.
$$

$b)$ If the relations {\rm (\ref{alfatt12.1})},
are satisfied,
then, for any $(t_0,X_0) \in \mathcal{Z} \times N$,
there exists a unique function $x \colon \big\{ t \in \mathcal{Z} \, \big|\,
t \geq t_0 \big\} \to N$, which,
for any $t\geq t_0$ verifies the recurrence
{\rm (\ref{ecmonoid})} and the condition $X(t_0)=X_0$.
\end{theorem}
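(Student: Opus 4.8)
The plan is to deduce Theorem~\ref{alfa.tt12} from results already established for the action recurrence, exactly as Theorem~\ref{alfa.t5} was deduced from Proposition~\ref{alfa.p1} and Theorem~\ref{alfa.t3}: one regards (\ref{ecmonoid}) as the special case of (\ref{ecalfaexm1.1}) in which the monoid $N$ acts on $M=N$ by left translation $\xi(A,X)=A\cdot X$, and then transcribes the argument of Theorem~\ref{alfa.tt11}.

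For part $a)$, fix $t_0\in\mathcal Z$ and let $X(\cdot)$ be a solution of (\ref{ecmonoid}) with $X(t_0)=E$. Computing $X(t+1_\alpha+1_\beta)$ in the two possible orders and using associativity of the monoid operation together with $X(t)\in N$, I would obtain
\begin{equation*}
\bigl(A_\alpha(t+1_\beta)A_\beta(t)\bigr)X(t)=\bigl(A_\beta(t+1_\alpha)A_\alpha(t)\bigr)X(t),\qquad\forall t\ge t_0 .
\end{equation*}
Setting $t=t_0$ and using $X(t_0)=E$ gives $A_\alpha(t_0+1_\beta)A_\beta(t_0)=A_\beta(t_0+1_\alpha)A_\alpha(t_0)$; since the hypothesis supplies such a solution for every $t_0\in\mathcal Z$, and $t_0$ is then an arbitrary point of $\mathcal Z$, this is precisely (\ref{alfatt12.1}). (Equivalently, since $X(\cdot)X_0$ solves (\ref{ecmonoid}) with value $X_0$ at $t_0$, the hypothesis of Theorem~\ref{alfa.tt11}$a)$ holds for $M=N$ and the action $\xi$, and (\ref{alfatt11.1}) evaluated at $x=E$ gives (\ref{alfatt12.1}).)

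For part $b)$, assume (\ref{alfatt12.1}). Acting with an arbitrary $x\in N$ on both sides and using the action axiom $(AB)x=A(Bx)$ turns (\ref{alfatt12.1}) into
\begin{equation*}
A_\alpha(t+1_\beta)\bigl(A_\beta(t)x\bigr)=A_\beta(t+1_\alpha)\bigl(A_\alpha(t)x\bigr),\qquad\forall x\in N ,
\end{equation*}
which is exactly the hypothesis (\ref{alfatt11.1}) of Theorem~\ref{alfa.tt11}$b)$ (equivalently, the compatibility (\ref{alfat3.1}) for $F_\alpha(t,\cdot)=\xi(A_\alpha(t),\cdot)$ on $M=N$). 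Invoking Theorem~\ref{alfa.tt11}$b)$ (or Theorem~\ref{alfa.t3}) then yields, for every $(t_0,X_0)\in\mathcal Z\times N$, a unique function $X\colon\{t\in\mathcal Z\mid t\ge t_0\}\to N$ satisfying (\ref{ecmonoid}) with $X(t_0)=X_0$, which is the assertion.

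I expect no genuine obstacle; the only point requiring care is the side on which one multiplies. Commutativity of $N$ is not assumed, so in part $a)$ the regrouping $A_\alpha(t+1_\beta)\bigl(A_\beta(t)X(t)\bigr)=\bigl(A_\alpha(t+1_\beta)A_\beta(t)\bigr)X(t)$ must rely on associativity only, and one should not expect the full pointwise identity (\ref{alfatt11.1}) for every $x$ to follow from a single solution; it is the substitution $x=E$, legitimate precisely because here the acted-upon set is $N$ itself with identity $E$, that converts the condition $X(t_0)=E$ into the clean relation (\ref{alfatt12.1}). Everything else is a transcription of the proofs of Theorems~\ref{alfa.t5} and~\ref{alfa.tt11}.
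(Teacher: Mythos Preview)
Your proposal is correct and follows precisely the route the paper indicates: the paper's entire proof is the single sentence ``By doing like in the proof of the Theorem~\ref{alfa.t5}, it is shown that\ldots'', and what you have written is exactly that transcription, viewing (\ref{ecmonoid}) as (\ref{ecalfaexm1.1}) for the left-translation action of $N$ on itself and then invoking the general results (Proposition~\ref{alfa.p1}/Theorem~\ref{alfa.t3}, or equivalently Theorem~\ref{alfa.tt11}). Your care about the weaker hypothesis in part $a)$ (only $X(t_0)=E$ rather than arbitrary initial data) and the observation that right-multiplication by $X_0$ recovers the full hypothesis of Theorem~\ref{alfa.tt11}$a)$ is a nice clarification that the paper leaves implicit.
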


\begin{definition}
\label{alfa.d2}
Suppose that the functions
$A_\alpha \colon \mathcal{Z} \to N$,
$\alpha \in \{1,2, \ldots, m\}$,
verify the relations
{\rm (\ref{alfatt12.1})}.

For each $t_0 \in \mathcal{Z}$, we denote
$\chi(\,\cdot\, , t_0) \colon
\big\{ t \in \mathcal{Z} \, \big|\, t \geq t_0 \big\}
\to N$ the unique solution of the recurrence
{\rm (\ref{ecmonoid})} which verifies $X(t_0)=E$.
The function
$$
\chi(\,\cdot\, , \cdot\,)
\colon \big\{ (t,s) \in \mathcal{Z} \times \mathcal{Z}\, \big|\,
t \geq s \big\} \to N
$$
is called the transition (fundamental) function
associated to the recurrence
{\rm (\ref{ecalfaexm1.1})}.
\end{definition}
This is the analog of the fundamental solution associated to the recurrence {\rm (\ref{rec.alfa.omog})},
introduced in the Definition \ref{alfa.d1}.

For $\alpha \in \{1,2, \ldots, m\}$
and $k \in \mathbb{N}$, we consider the function
$C_{\alpha,\, k} \colon \mathcal{Z} \to N$,
defined formally by the relation
{\rm (\ref{Cdef})},
replacing $I_n$ with $E$, but now
$A_\alpha \colon \mathcal{Z} \to N$,
hence
$A_\alpha(\cdot)$
are not matrix functions.

\begin{remark}
\label{alfa.obs6}
If the functions
$A_\alpha \colon \mathcal{Z} \to N$,
$\alpha \in \{1,2, \ldots, m\}$,
verify the relations
{\rm (\ref{alfatt12.1})},
then the Proposition {\rm \ref{alfa.p7}}
write identically, but for
$A_\alpha(\cdot)$, $C_{\alpha,\, k}(\cdot)$,
$\chi(\,\cdot\, , \cdot\,)$ considerate
in this Section (no longer matrices);
the proof is identically to those in the Proposition {\rm \ref{alfa.p7}}.
\end{remark}

Analogous to Proposition {\rm \ref{alfa.p8}}, we have
\begin{proposition}
\label{alfa.pp9}
We consider the functions $A_\alpha \colon \mathcal{Z} \to N$,
$\alpha \in \{1,2, \ldots, m\}$,
for which the relations {\rm (\ref{alfatt12.1})} are
satisfied. Let $(t_0,x_0) \in \mathcal{Z} \times M$.
Then, the unique function
$x \colon \big\{ t \in \mathcal{Z} \, \big|\,
t \geq t_0 \big\} \to M$,
which, for any $t \geq t_0$,
verifies the recurrence {\rm (\ref{ecalfaexm1.1})}
and the condition $x(t_0)=x_0$, is
\begin{equation*}
x(t)=\chi(t,t_0)x_0,
\quad
\forall t \geq t_0.
\end{equation*}

If $\forall \alpha \in \{1,2, \ldots, m\}$,
the functions $A_{\alpha}(\cdot)$ are constant,
then
\begin{equation*}
x(t)=
A_1^{(t^1-t_0^1)}A_2^{(t^2-t_0^2)}
\cdot
\ldots
\cdot
A_m^{(t^m-t_0^m)}x_0,
\quad
\forall t \geq t_0.
\end{equation*}
\end{proposition}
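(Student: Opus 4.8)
The plan is to mimic the proof of Proposition \ref{alfa.p8}, replacing matrix multiplication by the monoid operation on $N$ and the action of a matrix on a column by the action $\varphi$ of $N$ on $M$. The whole argument is a verification, so the only real task is to quote the correct companion results and to keep the monoid product and the action separate.

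First I would settle existence and uniqueness. The hypothesis (\ref{alfatt12.1}) is an identity in $N$; acting with both sides on an arbitrary $x\in M$ and using the associativity axiom $(AB)x=A(Bx)$ immediately yields (\ref{alfatt11.1}). Hence Theorem \ref{alfa.tt11}$(b)$ applies and, for each $(t_0,x_0)\in\mathcal{Z}\times M$, there is exactly one function $x\colon\{t\in\mathcal{Z}\mid t\geq t_0\}\to M$ satisfying the recurrence (\ref{ecalfaexm1.1}) together with $x(t_0)=x_0$. It therefore suffices to show that the displayed function is a solution.

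Next, set $y(t)=\chi(t,t_0)x_0$. By Definition \ref{alfa.d2} the function $\chi(\cdot,t_0)$ takes values in $N$, solves (\ref{ecmonoid}), and satisfies $\chi(t_0,t_0)=E$; consequently $y(t_0)=Ex_0=x_0$ by the action axiom, and for every $\alpha\in\{1,2,\ldots,m\}$ and every $t\geq t_0$,
\begin{equation*}
y(t+1_\alpha)=\chi(t+1_\alpha,t_0)x_0=\big(A_\alpha(t)\chi(t,t_0)\big)x_0=A_\alpha(t)\big(\chi(t,t_0)x_0\big)=A_\alpha(t)y(t),
\end{equation*}
again using $(AB)x=A(Bx)$. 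Thus $y$ solves (\ref{ecalfaexm1.1}) with $y(t_0)=x_0$, and the uniqueness obtained above forces $x=y$, i.e. $x(t)=\chi(t,t_0)x_0$ for all $t\geq t_0$. For the constant case I would invoke the monoid version of Proposition \ref{alfa.p7}, legitimate by Remark \ref{alfa.obs6}: when each $A_\alpha(\cdot)\equiv A_\alpha$, part $j)$ gives $C_{\alpha,k}(t)=A_\alpha^k$ and, through part $f)$, $\chi(t,t_0)=A_1^{t^1-t_0^1}A_2^{t^2-t_0^2}\cdots A_m^{t^m-t_0^m}$; substituting into $x(t)=\chi(t,t_0)x_0$ yields the claimed closed form.

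No step is genuinely difficult; the main point to watch is not to conflate the operation of the monoid $N$ with its action on $M$, and to pull in the right statements — Theorem \ref{alfa.tt11} for existence and uniqueness of the $M$-valued solution, and Definition \ref{alfa.d2} together with Remark \ref{alfa.obs6} for the properties of the $N$-valued fundamental function $\chi$. The associativity axiom $(AB)x=A(Bx)$ is precisely the bridge that turns the $N$-valued recurrence satisfied by $\chi(\cdot,t_0)$ into the $M$-valued recurrence satisfied by $y$.
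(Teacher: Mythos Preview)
Your proposal is correct and follows precisely the approach the paper indicates: the paper proves Proposition~\ref{alfa.p8} ``easily by direct computation'' and states that Proposition~\ref{alfa.pp9} is obtained analogously, which is exactly the verification you carry out (checking that $y(t)=\chi(t,t_0)x_0$ satisfies the recurrence and initial condition, then invoking uniqueness from Theorem~\ref{alfa.tt11}$(b)$, and finally appealing to Remark~\ref{alfa.obs6} for the constant case). Your care in distinguishing the monoid product from the action on $M$, and in noting that (\ref{alfatt12.1}) implies (\ref{alfatt11.1}) via the action axiom, makes explicit what the paper leaves implicit.
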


\vspace{0.3 cm}
We return to the recurrence {\rm (\ref{rec.alfa.lin})}, i.e.
\begin{equation*}
x(t+1_\alpha) = A_\alpha(t)x(t)+b_\alpha(t),
\quad
\forall \alpha \in \{1,2, \ldots, m\},
\end{equation*}
where $K$ is a field and
$A_\alpha \colon \mathcal{Z} \to \mathcal{M}_{n}(K)$,
$b_\alpha \colon \mathcal{Z} \to K^n=\mathcal{M}_{n,1}(K)$
verify the relations
{\rm (\ref{ecalfat5.1})} and {\rm (\ref{ecalfat5.2})}.
Let $x \colon \big\{ t \in \mathbb{Z}^m \, \big|\,
t \geq t_0 \big\} \to K^n$ the function which,
for any $t \geq t_0$, verifies the recurrence
{\rm (\ref{rec.alfa.lin})} and the condition
$x(t_0)=x_0$ ($t_0 \in \mathcal{Z}$, $x_0 \in K^n$).

We shall assume in addition that
$\forall \alpha \in \{1,2, \ldots, m\}$,
$\forall t \in \mathcal{Z}$, and that the matrix $A_\alpha(t)$
is invertible.

Let $\chi(\cdot , \cdot)$
be the transition (fundamental) matrix associated to
the linear homogeneous recurrence
{\rm (\ref{rec.alfa.omog})}.
According the Proposition {\rm \ref{alfa.p7}},
$\forall t\geq s$, the matrix $\chi(t , s)$ is invertible.
Since $\forall t\geq t_0$,
$\chi(t+  1_\alpha , t_0)=A_{\alpha}(t)  \chi(t,t_0)$
or $A_{\alpha}(t) =\chi(t+  1_\alpha , t_0)\chi(t,t_0)^{-1}$,
it follows that the equality {\rm (\ref{rec.alfa.lin})}
is equivalent to
\begin{equation*}
x(t+1_{\alpha}) =
\chi(t+  1_\alpha , t_0)\chi(t,t_0)^{-1}x(t)+ b_{\alpha}(t)
\end{equation*}
\begin{equation*}
\Longleftrightarrow
\chi(t+  1_\alpha , t_0)^{-1} x(t+1_{\alpha})
=  \chi(t,t_0)^{-1}x(t)+ \chi(t+  1_\alpha , t_0)^{-1}b_{\alpha}(t)
\end{equation*}

Let $\widetilde{x} \colon
\big\{ t \in \mathbb{Z}^m \, \big|\,
t \geq t_0 \big\} \to K^n$,
$\widetilde{x}(t)=\chi(t,t_0)^{-1}x(t)$,
$\forall t \geq t_0$.

For $\alpha \in \{1,2, \ldots, m\}$, let
\begin{equation*}
\widetilde{A}_{\alpha} \colon
\big\{ t \in \mathbb{Z}^m \, \big|\,
t \geq t_0 \big\} \to K^n,
\quad
\widetilde{A}_{\alpha}(t)=
\chi(t+  1_\alpha , t_0)^{-1}b_{\alpha}(t),
\quad
\forall t \geq t_0.
\end{equation*}

We have
$\widetilde{x}(t_0)=\chi(t_0,t_0)^{-1}x(t_0)=x(t_0)$.

From the above it follows that $x(\cdot)$
is a solution of the recurrence {\rm (\ref{rec.alfa.lin})}
which verifies $x(t_0)=x_0$,
if and only if $\widetilde{x}(\cdot)$ is the solution of the recurrence
\begin{equation}
\label{retilda}
\widetilde{x}(t+1_{\alpha}) =
\widetilde{A}_{\alpha}(t)+\widetilde{x}(t),
\quad
\forall t\geq t_0,
\quad
\forall \alpha \in \{1,2, \ldots, m\},
\end{equation}
which verifies $\widetilde{x}(t_0)=x_0$.

We find that the recurrence
{\rm (\ref{retilda})} is of type {\rm (\ref{ecalfaexm1.1})}, where

$\big ( N, \cdot, E \big )=\big ( K^n, + , 0 \big )$,
$M=N=K^n$ and the action is
\begin{equation*}
\psi \colon K^n \times K^n \to K^n,
\quad
\psi (\widetilde{A},\widetilde{x})=\widetilde{A}+\widetilde{x},
\quad
\forall \widetilde{A} \in K^n, \,\,
\forall \widetilde{x} \in K^n.
\end{equation*}

The relation {\rm (\ref{alfatt12.1})}
corresponding to the recurrence
{\rm (\ref{retilda})} is
\begin{equation*}
\widetilde{A}_\alpha(t+1_\beta)+\widetilde{A}_\beta(t)
=
\widetilde{A}_\beta(t+1_\alpha)+\widetilde{A}_\alpha(t)
\end{equation*}
\begin{equation*}
\Longleftrightarrow
\chi(t+1_\beta+  1_\alpha , t_0)^{-1}b_{\alpha}(t+1_\beta)
+
\chi(t+  1_\beta , t_0)^{-1}b_{\beta}(t)
\end{equation*}
\begin{equation*}
=
\chi(t+1_\alpha + 1_\beta , t_0)^{-1}b_{\beta}(t+1_\alpha)
+
\chi(t+  1_\alpha , t_0)^{-1}b_{\alpha}(t)
\end{equation*}
\begin{equation*}
\Longleftrightarrow
b_{\alpha}(t+1_\beta)
+
\chi(t+1_\alpha + 1_\beta , t_0)\chi(t+  1_\beta , t_0)^{-1}b_{\beta}(t)
\end{equation*}
\begin{equation*}
=
b_{\beta}(t+1_\alpha)
+
\chi(t+1_\alpha + 1_\beta , t_0)\chi(t+  1_\alpha , t_0)^{-1}b_{\alpha}(t)
\end{equation*}
\begin{equation*}
b_{\alpha}(t+1_\beta)
+
A_\alpha(t+1_\beta)b_{\beta}(t)
=
b_{\beta}(t+1_\alpha)
+
A_\beta(t+1_\alpha)b_{\alpha}(t)
\end{equation*}
\begin{equation*}
\Longleftrightarrow
A_\alpha(t+1_\beta)b_\beta(t)+b_\alpha(t+1_\beta)
=
A_\beta(t+1_\alpha)b_\alpha(t)+b_\beta(t+1_\alpha),
\end{equation*}
and this is the relation {\rm (\ref{ecalfat5.2})},
which is satisfied.

Let $\widetilde{\chi}(\,\cdot\, , \cdot\,)
\colon \big\{ (t,s) \in \mathbb{Z}^m \times \mathbb{Z}^m\, \big|\,
t \geq s \geq t_0 \big\} \to K^n$
be the fundamental function associated to the
recurrence {\rm (\ref{retilda})}.

According Proposition {\rm \ref{alfa.pp9}}, we have\,
$\widetilde{x}(t)=\widetilde{\chi}(t,t_0)+x_0$.

Since $x(t)=\chi(t,t_0)\widetilde{x}(t)$, it follows that
\begin{equation*}
x(t)=\chi(t,t_0)x_0 + \chi(t,t_0)\widetilde{\chi}(t,t_0).
\end{equation*}

According to the Remark \ref{alfa.obs6}, the matrix
$\widetilde{\chi}(t,t_0)$ writes as a sum of matrices $\widetilde{C}_{\alpha,\, k}(\cdot)$
(analogue of the relation in the step $f)$ of
Proposition \ref{alfa.p7}, but with the operation $``+"$
instead of multiplication), and
\begin{equation*}
\widetilde{C}_{\alpha,\, k}(t)=
\displaystyle \sum_{j=1}^k \widetilde{A}_\alpha(t+(k-j)\cdot 1_\alpha),\,\,
\mbox{if}\,\, k \geq 1,\,\,
\mbox{and}\,\, \widetilde{C}_{\alpha,\, 0}(t)=0,
\end{equation*}
i.e. the analog of the formula (\ref{Cdef}).

\section*{Acknowledgments}

The work has been funded by the Sectoral Operational Programme Human Resources
Development 2007-2013 of the Ministry of European Funds through
the Financial Agreement POSDRU/159/1.5/S/132395.

Partially supported by University Politehnica of Bucharest and by Academy of Romanian Scientists.
Special thanks goes to Prof. Dr. Ionel \c Tevy, who was willing to participate in our discussions
about multivariate sequences and to suggest the name ``multiple recurrences".

\end{document}